\documentclass{rspublic}
\usepackage{amsmath,amssymb}
\usepackage{graphicx}
\newcommand*{\mA}{{\mathbb A}}
\newcommand*{\mK}{{\mathbb K}}
\newcommand*{\mR}{{\mathbb R}}
\newcommand*{\mS}{{\mathbb S}}
\newcommand*{\mB}{{\mathbb B}}

\newcommand*{\cK}{{\mathcal K}}
\newcommand*{\pA}{{\mathbb A^{+}}}
\newcommand*{\mO}{{\mathbb O}}
\newcommand*{\mV}{{\mathbb V}}
\newcommand*{\mJ}{{\mathbb J}}
\newcommand*{\mE}{{\mathbb E}}

\newcommand*{\ml}{{1}}
\newcommand*{\la}{{\langle}}
\newcommand*{\ra}{{\rangle}}
\newcommand*{\id}{{\mathrm{id}}}

\newcommand*{\sh}{{\,\llcorner\!\llcorner\!\!\!\lrcorner\,}}
\newcommand*{\nn}{{\mathrm{n}}}
\newcommand*{\eps}{{\epsilon}}
\theoremstyle{plain}
\newtheorem{definition}{Definition}
\theoremstyle{remark}
\newtheorem{remark}{Remark}
\numberwithin{definition}{section}
\numberwithin{remark}{section}
\voffset=5ex

\begin{document}
\title{Stochastic expansions and Hopf algebras}
\author[S.J.A. Malham and A. Wiese]{Simon J.A. Malham and Anke Wiese}
\affiliation{Maxwell Institute for Mathematical Sciences 
              and School of Mathematical and Computer Sciences,
              Heriot-Watt University, Edinburgh EH14 4AS, UK \\
\textsf{Dedicated to Mrs Jeanne Anne Malham} \\
(17th April 2009)}

\label{firstpage}
\maketitle

\begin{abstract}{Stochastic expansions, Hopf algebras, numerical approximation}
We study solutions to nonlinear stochastic differential systems 
driven by a multi-dimensional Wiener process. A useful algorithm 
for strongly simulating such stochastic systems is the 
Castell--Gaines method, which is based on the exponential Lie series. 
When the diffusion vector fields commute,
it has been proved that at low orders this method 
is more accurate in the mean-square error than corresponding stochastic Taylor methods. 
However it has also been shown that when the diffusion vector fields do not commute,
this is not true for strong order one methods. Here we prove that
when there is \emph{no} drift, and the diffusion vector fields
do not commute, the exponential Lie series is usurped by the 
sinh-log series. In other words, the mean-square error associated
with a numerical method based on the sinh-log series, is always
smaller than the corresponding stochastic Taylor error, in fact to 
\emph{all} orders. Our proof utilizes the underlying Hopf algebra 
structure of these series, and a two-alphabet associative algebra
of shuffle and concatenation operations. We illustrate the benefits
of the proposed series in numerical studies.
\end{abstract}

\section{Introduction}
\label{intro}
We are interested in designing strong series solutions
of nonlinear Stratonovich stochastic differential
systems of the form
\begin{equation*}
y_t=y_0+\sum_{i=1}^d\int_0^t V_i(y_\tau)\,\mathrm{d}W_\tau^i.
\end{equation*}
Here $(W^1,\ldots,W^d)$ is a $d$-dimensional Wiener process and
$y_t\in\mathbb R^N$ for some $N\in\mathbb N$ and all $t\in\mathbb R_{+}$. 
We suppose that $V_i\colon\mathbb R^N\rightarrow \mathbb R^N$, 
$i=1,\ldots,d$, are smooth non-commuting vector fields which 
in coordinates are $V_i=\sum_{j=1}^N V_i^j\partial_{y_j}$.
Repeated iteration of the chain rule reveals the 
stochastic Taylor expansion for the solution to the 
stochastic differential system. Indeed for any smooth function
$f\colon\mR^N\to\mR$ we have the formal stochastic Taylor 
series expansion
\begin{equation*}
f\circ y_t=f\circ y_0+\sum_{w\in\pA}J_w(t)\,V_w\circ f\circ y_0.
\end{equation*}
Here $\pA$ is the collection of non-empty words over the
alphabet $\mathbb A=\{1,\ldots,d\}$. We adopt the standard notation for 
Stratonovich integrals, if $w=a_1\ldots a_n$ then 
\begin{equation*}
J_w(t)=\int_0^t\cdots\int_0^{\tau_{n-1}}
\mathrm{d}W^{a_1}_{\tau_n}\,\cdots\,\mathrm{d}W^{a_n}_{\tau_1}.
\end{equation*}
We have also written the composition of the vector fields as
$V_w\equiv V_{a_1}\circ V_{a_2}\circ\cdots\circ V_{a_n}$.
We define the flow-map $\varphi_t$ as the map such that
\begin{equation*}
\varphi_t\circ f\circ y_0=f\circ y_t.
\end{equation*}
It has a formal stochastic Taylor expansion of the form
\begin{equation*}
\varphi_t=\id+\sum_{w\in\pA}J_w(t)\,V_w.
\end{equation*}
Note that $\varphi_t\circ\varphi_s=\varphi_{t+s}$ for all non-negative $t,s$
and $\varphi_0=\id$, the identity mapping.

Classical strong numerical methods are based on truncating the 
stochastic Taylor expansion for the flow-map and 
applying the resulting approximate flow-map $\hat\varphi_t$ over successive small
subintervals of the global interval of integration required
(see Kloeden and Platen 1999 or Milstein 1994).
An important and expensive ingredient in all numerical methods is the
strong simulation/approximation of the required retained multiple 
integrals $J_w(t)$, on each integration step. Here we will 
take their suitable approximation as granted (see, for example, 
Wiktorsson 2001 for their practical simulation). 
Now let $F\colon\mathrm{Diff}(\mR^N)\to\mathrm{Diff}(\mR^N)$
be a smooth function. We can also construct flow approximations 
from $\varphi_t$ via the following procedure.\smallskip

$\bullet$ Construct the new series $\psi_t=F(\varphi_t)$.

$\bullet$ Truncate this series to produce the finite expansion $\hat\psi_t$.
 
$\bullet$ Reconstruct an approximate flow-map as $\hat\varphi_t=F^{-1}(\hat\psi_t)$.

$\bullet$ The ``flow error'' is the flow remainder $R_t=\varphi_t-\hat\varphi_t$.

$\bullet$ An approximate solution is given by $\hat y_t=\hat\varphi_t\circ y_0$.

$\bullet$ The mean-square error in this approximation is $\|R_t\circ y_0\|_{L^2}^2$.\smallskip

\noindent For the special case $F=\log$, i.e.\ the logarithm function, 
this procedure was outlined by Castell and Gaines (1996).
The resulting series $\psi_t=\log\varphi_t$ 
is the exponential Lie series, which lies in $\mR\la V_1,\ldots,V_d\ra$, 
the non-commutative algebra of formal series generated by the 
vector fields $V_1,\ldots,V_d$.
Indeed any truncation $\hat\psi_t$, with multiple integrals replaced
by suitable approximations, also lies in $\mR\la V_1,\ldots,V_d\ra$
and is therefore a vector field. Hence $\hat\varphi_t=\exp\hat\psi_t$
and an approximation $\hat y_t$ to the solution can be constructed by solving the 
ordinary differential system for $u=u(\tau)$: 
\begin{equation*}
u'=\hat\psi_t\circ u
\end{equation*}
for $\tau\in[0,1]$ with $u(0)=y_0$. The solution 
to this system at time $\tau=1$, itself approximated by an ordinary
differential numerical method, is $u(1)\approx\hat y_t$. 

So far, what has been proved for the Castell--Gaines method?
Castell and Gaines (1995, 1996) prove that
the strong order one-half method constructed in this way is
always more accurate than the Euler--Maruyama method. Indeed
they prove that this method is \emph{asymptotically efficient}
in the sense of Newton (1991). Further in the case of a single
driving Wiener process ($d=1$), they prove the same is true for
the strong order one Castell--Gaines method. By asymptotically
efficient we mean, quoting from Newton (1991), that they 
``minimize the leading coefficient in the expansion of mean-square errors
as power series in the sample step size''.
Lord, Malham and Wiese (2008) and Malham and Wiese (2008)
proved that when the diffusion vector fields \emph{commute}, but not necessarily
with the drift vector field, then the strong order one and also three-halves
Castell--Gaines methods have a mean-square error that is smaller
than the mean-square error for the corresponding stochastic Taylor
method. However Lord, Malham and Wiese (2008) also prove that for
linear diffusion vector fields which do not commute, the
strong order one Castell--Gaines method does not necessarily have a smaller
mean-square error than the corresponding stochastic Taylor method.
Indeed there are regions in the phase space where the local
error of the stochastic Taylor method is smaller.

Hence we are left with the following natural question when the diffusion
vector fields do not commute. Is there a solution series ansatz
$\psi_t=F(\varphi_t)$ for some function $F$, for which
the mean-square error of the numerical method so constructed,
is always smaller than the corresponding stochastic Taylor method?
In this paper we answer this question, indeed under the assumption
there is \emph{no drift}, we:\smallskip

(1) Prove the mean-square error of an approximate
solution constructed from the \emph{sinh-log} expansion 
(with $F=\sinh\log$ above) is smaller than that for the 
stochastic Taylor expansion, \emph{to all orders}. 

(2) Prove that a numerical method based on the sinh-log expansion has 
a global error that is smaller than that for the corresponding stochastic
Taylor method.

(3) Utilize the Hopf shuffle algebra of words underlying such expansions;
in fact we retract to a \emph{new} associative algebra of concatenation 
and shuffle operators, that acts on the Hopf shuffle algebra of words.

(4) Underpin our theoretical results with concrete numerical simulations.\smallskip

\noindent We examine and interpret these statements in detail next, in
Section~\ref{sec:ppleideas}, where we answer the following immediate questions. 
First, what precisely, is the sinh-log approximation and the 
properties we prove for it? Second, how do we prove the result; what is the 
connection with Hopf shuffle algebras and the concatenation-shuffle 
operator algebra mentioned? 
In Section~\ref{sec:cspolys} we provide the technical specification of
the concatenation-shuffle operator algebra and prove
some polynomial identities important for our main result.
In Section~\ref{sec:mainresult} we present our main result.
Then in Section~\ref{sec:prac} we discuss the global error result above, and perform 
numerical simulations confirming our results.
We give some concluding remarks in Section~\ref{sec:conclu}.

%
%   \pi\colon\bO(w)\to\mK\la\mB\ra fibre bundle?
%
%   tfrac12(\varphi-\varphi^{-1}) and K_w form?
%

\section{Principal ideas}\label{sec:ppleideas}
The goal of this section is to motivate and make precise statements 
about the sinh-log approximation we propose.

\subsection{Stochastic series expansion approximations}
We begin by outlining the approximation procedure presented 
in the introduction in more detail. 
Suppose the smooth function $F$ has a 
real series expansion of the form
\begin{equation*}
F(x)=\sum_{k=1}^\infty C_k\,(x-1)^k,
\end{equation*}
with some finite radius of convergence about $x=1$, and
for some coefficient set $\{C_k\colon k\geqslant1\}$ with $C_1=1$.
Given the flow-map $\varphi_t$, we construct the series
\begin{equation*}
\psi_t=F(\varphi_t)\equiv\sum_{k=1}^\infty C_k\,(\varphi_t-\id)^{k}.
\end{equation*}
We substitute, into this series expansion, the stochastic Taylor series 
for the flow-map $\varphi_t$. After rearrangement, we get
\begin{equation*}
\psi_t=\sum_{w\in\pA} K_w(t)\,V_w,
\end{equation*}
where
\begin{equation*}
K_w(t)=\sum_{k=1}^{|w|} C_k\,\sum_{\substack{u_1,\ldots,u_k\in\pA\\u_1u_2\cdots u_k=w}} 
J_{u_1}J_{u_2}\cdots J_{u_k}(t).
\end{equation*}
We truncate the series, dropping all terms $V_w$ with words 
$w$ of length $|w|\geqslant n+1$. This generates the approximation $\hat\psi_t$,
once we have replaced all retained multiple integrals $J_u(t)$ by suitable
approximations. Then, in principle, we construct the solution approximation 
$\hat y_t$ from $\hat y_t=\hat\varphi_t\circ y_0$, where
\begin{equation*}
\hat\varphi_t=F^{-1}(\hat\psi_t).
\end{equation*}
Performing this reconstruction is nontrivial in general (see Section~\ref{sec:prac}).

For example, to construct the exponential Lie series approximation
of Castell and Gaines (1995), we take $F=\log$ and construct the series
\begin{equation*}
\psi_t=\log\varphi_t\equiv\sum_{k=1}^\infty C_k\,(\varphi_t-\id)^{k},
\end{equation*}
where $C_k=\tfrac{1}{k}(-1)^{k-1}$ for $k\geqslant1$.
Substituting the stochastic Taylor series for $\varphi_t$,
the series expansion for $\psi_t$ above becomes the 
exponential Lie series (see Strichartz 1987, Ben Arous 1989, Castell 1993 
or Baudoin 2004 for the full series)
\begin{equation*}
\psi_t=\sum_{w\in\pA} K_{[w]}(t)\,V_{[w]}
\end{equation*}
where for $w=a_1\ldots a_n$ we have 
$V_{[w]}=[V_{a_1},[V_{a_2},\ldots,[V_{a_{n-1}},V_{a_n}]\ldots]$
and
\begin{equation*}
K_{[w]}=\sum_{\sigma\in\mathbb G_{|w|}}\frac{(-1)^{e(\sigma)}}{|w|^2D_{e(\sigma)}^{|w|-1}}
J_{\sigma^{-1}\circ w}.
\end{equation*}
Here $\mathbb G_{|w|}$ is the group of permutations of the index set
$\{1,\ldots,|w|\}$, $e(\sigma)$ is the cardinality of the set 
$\bigl\{j\in\{1,\ldots,|w|-1\}\colon \sigma(j)>\sigma(j+1)\bigr\}$,
and $D_{e(\sigma)}^{|w|-1}$ is the combinatorial number: $|w|-1$ choose $e(\sigma)$.
Truncating this series and using suitable approximations for the retained 
$J_{\sigma^{-1}\circ w}$, produces $\hat\psi_t$.
We then reconstruct the solution approximately using $\hat\varphi_t=\exp\hat\psi_t$.
The actual solution approximation $\hat y_t=\hat\varphi_t\circ y_0$ 
is then computed by solving the ordinary differential equation generated
by the vector field $\hat\psi_t$.

To construct the sinh-log approximation, we take 
$F=\sinh\log$ so that 
\begin{equation*}
\psi_t=\sinh\log\varphi_t\equiv\tfrac12(\varphi_t-\varphi_t^{-1})
\equiv\sum_{k=1}^\infty C_k\,(\varphi_t-\id)^{k},
\end{equation*}
where $C_1=1$ and $C_k=\tfrac12(-1)^{k-1}$ for $k\geqslant2$. 
Again substituting the stochastic Taylor series for $\varphi_t$ we get
the series expansion for $\psi_t$ shown above with terms $K_w(t)V_w$
where the coefficients $K_w(t)$ now explicitly involve the sinh-log 
coefficients $C_k$. Then, in principle, we can reconstruct 
the solution approximately using
\begin{equation*}
\hat\varphi_t=\exp\sinh^{-1}(\hat\psi_t)\equiv\hat\psi_t+\sqrt{\id+\hat\psi_t^2}.
\end{equation*}

\begin{remark}
Suppose the vector fields $V_i$, $i=1,\ldots,d$ are sufficiently smooth and
$t$ sufficiently small (but finite). Then the approximation $\hat\varphi_t\circ y_0$
constructed using the sinh-log expansion, as just described, is square-integrable. 
Further if $y$ is the exact solution of the stochastic differential equation,
and $\hat\varphi_t$ includes all terms $K_wV_w$ involving words of length $w\leqslant n$,
then there exists a constant $C(n,|y|)$ such that 
$\|y_t-\hat\varphi_t\circ y_0\|_{L^2}\leqslant C(n,|y|)\,t^{(n+1)/2}$; 
here $|\cdot|$ is the Euclidean norm.
This follows by arguments exactly analogous to those for the exponential Lie
series given in Malham \& Wiese (2008; Theorem~7.1 and Appendix~A).
\end{remark}

\begin{remark}
Naturally, the exponential Lie series $\psi_t$ and and its truncation $\hat\psi_t$
lie in the Lie algebra of vector fields generated by $V_1,\ldots,V_d$.
Hence $\exp(\tau\hat\psi_t)$ is simply the ordinary flow-map associated with
the autonomous vector field $\hat\psi_t$. 
\end{remark}

\begin{remark}
The exponential Lie series originates with Magnus (1954) and Chen (1957); see 
Iserles (2002). In stochastic approximation it appears in 
Kunita (1980), Fleiss (1981), Azencott (1982),
Strichartz (1987), Ben Arous (1989), Castell (1993), 
Castell \& Gaines (1995,1996), Lyons (1998), Burrage \& Burrage (1999), 
Baudoin (2004), Lord, Malham \& Wiese (2009) and Malham \& Wiese (2008).
\end{remark}

\subsection{Hopf algebra of words}
Examining the coefficients $K_w$ in the series expansion for $\psi$
above, we see that they involve linear combinations
of products of multiple Stratonovich integrals (we suspend explicit
$t$ dependence momentarily). The question is, can we determine $K_w$ explicitly? 
Our goal here is to reduce this problem to a pure combinatorial one. 
This involves the Hopf algebra of words (see Reutenauer 1993).

Let $\mK$ be a commutative ring with unit. 
In our applications we take $\mK=\mR$ or $\mK=\mJ$,
the ring generated by multiple Stratonovich integrals and the constant
random variable $1$, with pointwise multiplication and addition.
Let $\mK\la\mA\ra$ denote the set of all noncommutative polynomials
and formal series on the alphabet $\mA=\{1,2,\ldots,d\}$ over $\mK$.
With the concatenation product, $\mK\la\mA\ra$ is the 
associative \emph{concatenation algebra}.
For any two words $u,v\in\mK\la\mA\ra$ with lengths $|u|$ and $|v|$, 
we define the \emph{shuffle product} $u\sh v$ to be the sum of the 
words of length $|u|+|v|$ created by shuffling all the letters 
in $u$ and $v$ whilst preserving their original order.
The shuffle product is extended to $\mK\la\mA\ra$ by bilinearity.
It is associative, and distributive with
respect to addition, and we obtain on $\mK\la\mA\ra$
a commutative algebra called the \emph{shuffle algebra}
(note $\ml\sh w=w\sh \ml=w$ for any word $w$ where $\ml$ is the empty word).
The linear signed reversal mapping $\alpha\in\mathrm{End}(\mK\la\mA\ra)$:
\begin{equation*}
\alpha\circ w=(-1)^na_n\ldots a_1,
\end{equation*}
for any word $w=a_1\ldots a_n$ is the \emph{antipode} on $\mK\la\mA\ra$.
There are two \emph{Hopf algebra} structures
on $\mK\la\mA\ra$, namely $(\mK\la\mA\ra,c,\delta,\eta,\varepsilon,\alpha)$ 
and $(\mK\la\mA\ra,s,\delta',\eta,\varepsilon,\alpha)$, where
$\eta$ and $\varepsilon$ are unit and co-unit elements, and 
$\delta$ and $\delta'$ respective co-products (Reutenauer, p.~27).
We define the associative algebra using the complete tensor product 
\begin{equation*}
\mathcal H=\mK\la\mA\ra\overline\otimes\,\mK\la\mA\ra
\end{equation*}
with the shuffle product on the left and the concatenation
product on the right (Reuntenauer, p.~29). 
The product of elements 
$u\otimes x,v\otimes y\in\mathcal H$ is given by
\begin{equation*}
(u\otimes x)(v\otimes y)=(u\sh v)\otimes(xy),
\end{equation*}
and formally extended to infinite linear combinations in $\mathcal H$
via linearity. As a tensor product of two Hopf algebra structures,
$\mathcal H$ itself acquires a Hopf algebra structure.

\subsection{Pullback to Hopf shuffle algebra}
Our goal is to pullback the flow-map 
$\varphi$ and also $\psi$ to $\mathcal H$ (with $\mK=\mR$). 
Let $\mV$ be the set of all vector fields
on $\mR^N$; it is an $\mR$-module over $C^\infty\bigl(\mR^N\bigr)$
(see Varadarajan 1984, p.~6). We know that for the stochastic
Taylor series the flow-map $\varphi\in\mJ\la\mV\ra$ (with
vector field composition as product).
Since $\mJ\la\mV\ra\cong\bigoplus_{n\geqslant0}\mJ\otimes\mV_n$,
where $\mV_n$ is the subset of $\mV$ of compositions of
vector fields of length $n$, we can write
\begin{equation*}
\varphi=1\otimes\id_{\mV}+\sum_{w\in\pA}J_w\otimes V_w.
\end{equation*}
The linear \emph{word-to-vector field map} 
$\kappa\colon\mR\la\mA\ra\rightarrow\mV$
given by $\kappa\colon\omega\mapsto V_\omega$
is a concatenation homomorphism, i.e.\ $\kappa(uv)=\kappa(u)\kappa(v)$
for any $u,v\in\mA^+$. 
And the linear \emph{word-to-integral map}
$\mu\colon\mR\la\mA\ra\rightarrow\mJ$ given by $\mu\colon\omega\mapsto J_\omega$
is a shuffle homomorphism, i.e.\ $\mu(u\sh v)=\mu(u)\mu(v)$ for any $u,v\in\mA^+$
(see for example, Lyons, \emph{et. al.\ } 2007, p.~35 
or Reutenauer 1993, p.~56). Hence the map 
$\mu\otimes\kappa\colon\mathcal H\rightarrow\bigoplus_{n\geqslant0}\mJ\otimes\mV_n$
is a Hopf algebra homomorphism.
The pullback of the flow-map $\varphi$ by $\mu\otimes\kappa$ is
\begin{equation*}
(\mu\otimes\kappa)^*\varphi=\ml\otimes\ml+\sum_{w\in\pA}w\otimes w.
\end{equation*}
All the relevant information about the stochastic flow 
is encoded in this formal series; it is essentially Lyons' \emph{signature} 
(see Lyons, Caruana and L\'evy 2007; Baudoin 2004).
Hence by direct computation, formally we have
\begin{align*}
(\mu\otimes\kappa)^*\psi
=&\;\sum_{k\geqslant 1}C_k\bigl((\mu\otimes\kappa)^*\varphi-\ml\otimes\ml\bigr)^k\\
=&\;\sum_{k\geqslant 1}C_k\Biggl(\sum_{w\in\pA}w\otimes w\Biggr)^k\\
=&\;\sum_{k\geqslant 1}C_k\Biggl(\sum_{u_1,\ldots,u_k\in\pA}(u_1\sh\ldots\sh u_k)\otimes
(u_1\ldots u_k)\Biggr)\\
=&\;\sum_{w\in\mA^*}\Biggl(\sum_{k=1}^{|w|}
C_k\sum_{\substack{u_1,\ldots,u_k\in\pA\\w=u_1\ldots u_k}}u_1\sh\ldots\sh u_k\Biggr)\otimes w\\
=&\;\sum_{w\in\mA^*}(K\circ w)\otimes w,
\end{align*}
where $K\circ w$ is defined by
\begin{equation*}
K\circ w=\sum_{k=1}^{|w|}
C_k\sum_{\substack{u_1,\ldots,u_k\in\pA\\w=u_1\ldots u_k}}u_1\sh\ldots\sh u_k,
\end{equation*}
corresponds to $K_w$ (indeed it is the pullback $\mu^*K_w$ to $\mO_w$; 
see Section~\ref{sec:cspolys}).
Having reduced the problem of determining $K\circ w$ to the algebra of shuffles,
a further simplifying reduction is now possible.

\begin{remark}
The use of Hopf shuffle algebras
in stochastic expansions can be traced through, for example,
Strichartz (1987), Reutenauer (1993), Gaines (1994), Li \& Liu (2000), 
Kawski (2001), Baudoin (2004), Murua (2005), 
Ebrahimi--Fard \& Guo (2006), Manchon \& Paycha (2006) and 
Lyons, Caruana \& L\'evy (2007), to name a few. 
The paper by Munthe--Kaas \& Wright (2008) on the Hopf algebraic 
of Lie group integrators actually instigated the Hopf algebra direction 
adopted here. A useful outline on the use of 
Hopf algebras in numerical analysis can be found therein, as
well as the connection to the work by 
Connes \& Miscovici (1998) and Connes \& Kreimer (1998) in 
renormalization in perturbative quantum field theory.
\end{remark}

\subsection{Retraction to concatenations and shuffles}
For any given word $w=a_1\ldots a_{n+1}$ we now focus on 
the coefficients $K\circ w$. We observe that it is the concatenation and 
shuffle operations encoded in the structural form of the sum for $K\circ w$
that carry all the relevant information. Indeed each term 
$u_1\sh\ldots\sh u_k$ is a partition of $w$ into subwords that 
are shuffled together. Each subword $u_i$ is a concatenation of 
$|u_i|$ letters, and so we can reconstruct each term of the
form $u_1\sh\ldots\sh u_k$ from the following sequence applied to the 
word $w$:
\begin{equation*}
c^{|u_1|-1}sc^{|u_2|-1}s\ldots sc^{|u_k|-1}
\end{equation*}
where the power of the letter $c$ indicates the number of letters concatenated
together in each subword $u_i$ and the letter $s$ denotes the shuffle product
between the subwords. In other words, if we factor out the word $w$, we
can replace $K\circ w$ by a polynomial $K$ of the letters $c$ and $s$. 
In fact, in Lemma~\ref{lem:convert} we show that
\begin{equation*}
K=\sum_{k=0}^nC_{k+1}(c^{n-k}\sh s^k).
\end{equation*}
Thus we are left with the task of simplifying this polynomial in two
variables (lying in the real associative algebra of concatenation 
and shuffle operations). 

\begin{remark}
We devote Section~\ref{sec:cspolys} to the rigorous justification of
this retraction, the result above, and those just following.
A key ingredient is to identify the correct action of this algebra over 
$\bigl(\mK\la\mA\ra,s,\alpha\bigr)$.
\end{remark}

\begin{remark}
There is a natural right action by the symmetric group $\mS_n$ 
on $\mK\la\mA\ra_n$, the subspace of $\mK\la\mA\ra$ spanned by words of length $n$
(Reutenauer 1993, Chapter~8).
This action is transitive and extends by linearity to
a right action of the group algebra $\mK\la\mS_n\ra$ on $\mK\la\mA\ra_n$.
We are primarily concerned with shuffles and multi-shuffles,
a subclass of operations in $\mK\la\mS_n\ra$, and in particular,
we want a convenient structure that enables us to combine
single shuffles to produce multi-shuffles.
\end{remark}

\subsection{Stochastic sinh-log series coefficients}
The coefficient set $\{C_k\colon k\geqslant1\}$ determines the 
form of the function $F$. 
Our ultimate goal is to show order by order that the stochastic
sinh-log expansion guarantees superior accuracy. Hence order
by order we allow a more general coefficient set $\{C_k\colon k\geqslant1\}$, 
and show that the sinh-log choice provides the guarantee we seek.

\begin{definition}[Partial sinh-log coefficient set]
Define the partial sinh-log coefficient sequence:
\begin{equation*}
C_k=\begin{cases}
     1, & \quad k=1,\\
    \tfrac12(-1)^{k-1}, & \quad k\geqslant2,\\
    \tfrac12(-1)^{n}+\epsilon, & \quad k=n+1,
    \end{cases}
\end{equation*}
where $\epsilon\in\mathbb R$.
\end{definition}

With the choice of coefficients $\{C_k\}$, we see that
\begin{equation*}
K=\tfrac12 c^n+\tfrac12\sum_{k=0}^n(-1)^k(c^{n-k}\sh s^k)+\epsilon s^n.
\end{equation*}
This has an even simpler form. 

\begin{lemma}\label{lem:sinhlogcoeffs}
With the partial sinh-log coefficient sequence $\{C_k\}$, the 
coefficient $K$ is given by
\begin{equation*}
K=\tfrac12\bigl(c^n-\alpha_n\bigr)+\epsilon\,s^n,
\end{equation*}
where $\alpha_n$ is the antipode for words of length $n+1$.
\end{lemma}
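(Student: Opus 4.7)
The plan is to substitute the partial sinh-log coefficients into the general expression for $K$ from Lemma~\ref{lem:convert} (whose computation is already displayed in the excerpt as $K=\tfrac12 c^n+\tfrac12\sum_{k=0}^n(-1)^k(c^{n-k}\sh s^k)+\epsilon\,s^n$) and reduce the lemma to the single operator identity
\begin{equation*}
\sum_{k=0}^n (-1)^k \bigl(c^{n-k}\sh s^k\bigr) = -\alpha_n.
\end{equation*}
Granting this, the lemma follows immediately, since the displayed formula collapses to $\tfrac12(c^n-\alpha_n)+\epsilon\,s^n$.

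To prove the identity, I would evaluate both sides on an arbitrary word $w=a_1\ldots a_{n+1}$ of length $n+1$. Using the interpretation of $c$ and $s$ outlined before Lemma~\ref{lem:convert}, the shuffle $c^{n-k}\sh s^k$ expands as the sum of all $\binom{n}{k}$ words of length $n$ in the two-letter alphabet $\{c,s\}$ with exactly $k$ occurrences of $s$. Each such word, applied to $w$, corresponds uniquely to an ordered partition of $w$ into $k+1$ nonempty subwords $u_1,\ldots,u_{k+1}$ and returns their iterated shuffle. Thus the left-hand side, applied to $w$, becomes
\begin{equation*}
\sum_{k=0}^n(-1)^k\sum_{\substack{u_1,\ldots,u_{k+1}\in\pA\\u_1\cdots u_{k+1}=w}}u_1\sh\cdots\sh u_{k+1}.
\end{equation*}

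The next step is to recognize this alternating sum as Takeuchi's explicit formula for the antipode in the connected graded Hopf algebra $(\mK\la\mA\ra,s,\delta',\eta,\varepsilon,\alpha)$, where the product is the shuffle and the coproduct $\delta'$ is deconcatenation: the $k$-fold iterated reduced coproduct of $w$ enumerates exactly the ordered partitions of $w$ into $k+1$ nonempty subwords, and Takeuchi's formula takes the alternating sum of their shuffle products. Since the antipode of this Hopf algebra is already known explicitly to be the signed reversal $\alpha\circ w=(-1)^{n+1}a_{n+1}\ldots a_1$, and $\alpha_n$ denotes its restriction to words of length $n+1$, a routine reindexing $k=m-1$ against Takeuchi's sign $(-1)^m$ yields $-\alpha_n\circ w$. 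Since this holds for every $w$ of length $n+1$, the identity holds in the operator algebra.

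The principal obstacle is the first bridging step: confirming rigorously, within the framework of Section~\ref{sec:cspolys}, that the operator $c^{n-k}\sh s^k$ genuinely realizes the ``partition and shuffle'' action described above on words of length $n+1$, and that the operator algebra acts on the shuffle Hopf algebra $\bigl(\mK\la\mA\ra,s,\alpha\bigr)$ in a way that is compatible with the iterated reduced deconcatenation coproduct. Once that dictionary is in place, the identity is pure Hopf-algebraic bookkeeping, and the lemma follows.
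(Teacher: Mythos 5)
Your proposal is correct, and it reduces the lemma to the same key identity as the paper, namely $\sum_{k=0}^n(-1)^k(c^{n-k}\sh s^k)\equiv-\alpha_n$, i.e.\ $(c-s)^n\equiv-\alpha_n$ in the notation of Lemma~\ref{lem:antipodepoly}; but you establish that identity by a genuinely different route. The paper proves it by induction: it first derives the recursion $\alpha_n\equiv-c^n-\sum_{k=0}^{n-1}c^ks\,\alpha_{n-k-1}$ from repeated integration by parts on the Stratonovich integral $J_w$ pulled back to words (Lemma~\ref{lem:partint}), and then matches this against the concatenation expansion $(c-s)^n=c^n-\sum_{k=0}^{n-1}c^ks(c-s)^{n-k-1}$. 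You instead evaluate the alternating sum on an arbitrary word, identify $\zeta\bigl((c^{n-k}\sh s^k)\otimes w\bigr)$ with the sum of iterated shuffles over ordered partitions of $w$ into $k+1$ nonempty subwords, and recognize the result as Takeuchi's formula for the antipode of the connected graded Hopf algebra $(\mK\la\mA\ra,s,\delta')$, whose antipode is already known to be the signed reversal. Both are sound; your identification of $c^{n-k}\sh s^k$ with the sum over compositions is exactly what the paper's proof of Lemma~\ref{lem:convert} uses, so the ``dictionary'' you flag as the principal obstacle is already in place. What the two approaches buy: the paper's argument is self-contained and deliberately exhibits the stochastic origin of the identity (the partial integration formula is the antipode recursion $m\circ(\id\otimes\alpha)\circ\delta'=\eta\varepsilon$ unrolled), at the cost of an induction and two auxiliary lemmas; yours imports a general Hopf-algebraic theorem that gives the closed form of the antipode directly, making the identity an instance of standard theory and dispensing with the induction, but it presupposes familiarity with Takeuchi's formula and obscures the link to integration by parts that motivates the paper's operator-algebra construction in the first place.
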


\begin{proof}
We think of $(c-s)^n$, with expansion by concatenation, 
as the generator for the polynomial 
(in $\mK\la\mB\ra_n$; see Section~\ref{sec:cspolys}) 
defined by  
\begin{equation*}
(c-s)^n=\sum_{k=0}^n (-1)^k (c^{n-k}\sh s^k).
\end{equation*}
Then by Lemma~\ref{lem:antipodepoly} in Section~\ref{sec:cspolys} we have the 
following identity
\begin{equation*}
(c-s)^n\equiv-\alpha_n.
\end{equation*}
Hence using the sinh-log coefficients and splitting the
first term, we have  
\begin{align*}
K=&\;\tfrac12 c^n+\tfrac12\sum_{k=0}^{n}(-1)^k(c^{n-k}\sh s^{k})+\epsilon\,s^n\\
=&\;\tfrac12 c^n+\tfrac12(c-s)^n+\epsilon\,s^n\\
=&\;\tfrac12 c^n-\tfrac12\alpha_n+\epsilon\,s^n.
\end{align*}
\end{proof}

\begin{corollary}\label{cor:coeffform}
For any word $w=a_1\ldots a_{n+1}$ we have
\begin{equation*}
K\circ w=\tfrac12\bigl(w-\alpha\circ w\bigr)+\epsilon\,a_1\sh\ldots\sh a_{n+1},
\end{equation*}
and thus
\begin{equation*}
K_w=\tfrac12\bigl(J_w-J_{\alpha\circ w}\bigr)+\epsilon\prod_{i=1}^{n+1}J_{a_i}.
\end{equation*}
\end{corollary}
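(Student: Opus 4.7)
The plan is to view the corollary as a direct evaluation of the polynomial identity from Lemma~\ref{lem:sinhlogcoeffs}, followed by a single application of the shuffle homomorphism $\mu$. Since Lemma~\ref{lem:sinhlogcoeffs} already establishes the operator identity
\begin{equation*}
K=\tfrac12\bigl(c^n-\alpha_n\bigr)+\epsilon\,s^n
\end{equation*}
in the concatenation--shuffle operator algebra acting on $\mK\la\mA\ra_{n+1}$, all that remains is to read off what each of $c^n$, $\alpha_n$ and $s^n$ does when applied to a single word $w=a_1\ldots a_{n+1}$ of length $n+1$, and then push the result through to the integral ring $\mJ$.

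First I would unpack each operator on $w$ using the action defined in Section~\ref{sec:cspolys}. The concatenation operator iterated $n$ times binds all $n+1$ letters of $w$ back into a single word, so $c^n\circ w=w$. The operator $\alpha_n$ is by definition the antipode restricted to words of length $n+1$, giving $\alpha_n\circ w=\alpha\circ w=(-1)^{n+1}a_{n+1}\ldots a_1$. The shuffle operator iterated $n$ times on the $n+1$ single letters of $w$ yields the full shuffle $s^n\circ w=a_1\sh\ldots\sh a_{n+1}$. Assembling these three evaluations inside the expression from Lemma~\ref{lem:sinhlogcoeffs} yields the first stated identity,
\begin{equation*}
K\circ w=\tfrac12\bigl(w-\alpha\circ w\bigr)+\epsilon\,a_1\sh\ldots\sh a_{n+1}.
\end{equation*}

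Next I would apply the word-to-integral map $\mu$, which is linear and a shuffle homomorphism (as recalled before Lemma~\ref{lem:sinhlogcoeffs}). Linearity sends $\tfrac12(w-\alpha\circ w)$ to $\tfrac12(J_w-J_{\alpha\circ w})$, and the shuffle homomorphism property converts the iterated shuffle of letters into a plain product of one-letter integrals:
\begin{equation*}
\mu\bigl(a_1\sh\ldots\sh a_{n+1}\bigr)=J_{a_1}\cdots J_{a_{n+1}}=\prod_{i=1}^{n+1}J_{a_i}.
\end{equation*}
Since $K_w=\mu^*(K\circ w)$ by the construction recalled just before the corollary, combining these two observations delivers the second identity.

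The only conceptual subtlety, and the main place where care is required, is checking that the symbolic identity of Lemma~\ref{lem:sinhlogcoeffs} really does act on $w$ through exactly these three operations --- i.e.\ that the action of $c^n$, $s^n$ and $\alpha_n$ on a length-$(n+1)$ word is as claimed, with the correct indexing conventions. This is a matter of referring to the operator action spelt out in Section~\ref{sec:cspolys}; once that is in place the corollary is an immediate consequence and no further combinatorial work is needed.
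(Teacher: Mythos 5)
Your proposal is correct and is exactly the argument the paper intends: the corollary is stated without proof as an immediate consequence of Lemma~\ref{lem:sinhlogcoeffs}, obtained by evaluating $c^n$, $\alpha_n$ and $s^n$ on the word $w$ via the action $\zeta$ of Section~\ref{sec:cspolys} and then applying the linear shuffle homomorphism $\mu$. Your identification of the three operator evaluations and of $\mu(a_1\sh\ldots\sh a_{n+1})=\prod_{i=1}^{n+1}J_{a_i}$ as the only points needing verification is precisely right.
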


\begin{remark}
For the stochastic sinh-log expansion the coefficients $K_w$
thus have an extremely simple form. 
There are several strategies to prove this form.
The result can be proved directly in terms of multiple 
Stratonovich integrals by judicious use of their properties,
the partial integration formula and induction---the proof is 
long but straightforward. That this strategy 
works is also revealed by the strategy we have adopted in 
this paper, which we believe is shorter and more insightful.
\end{remark}

\section{Concatenation-shuffle operator algebra}\label{sec:cspolys}
\subsection{Algebra and action}
With $\mB=\{c,s\}$, let $\mK\la\mB\ra$ denote the set of all noncommutative 
polynomials and formal series on $\mB$ over $\mK$. 
We can endow $\mK\la\mB\ra$ with the concatenation 
product or shuffle product, and also generate an associative 
\emph{concatenation-shuffle operator algebra} on 
$\mK\la\mB\ra$ as follows.

\begin{definition}[Shuffle gluing product]
The map $g\colon\mK\la\mB\ra\otimes\mK\la\mB\ra\rightarrow\mK\la\mB\ra$,
the associative and bilinear shuffle gluing product, is defined by
\begin{equation*}
g\colon b_1\otimes b_2\mapsto b_1sb_2,
\end{equation*}
i.e.\ we concatenate the element $b_1$ with $s$ and the element $b_2$ 
in $\mK\la\mB\ra$ as shown. 
\end{definition}

Endowed with the shuffle gluing product, $\mK\la\mB\ra$ is
an associative algebra with unit element $s^{-1}$ 
(see Reutenauer 1993, p.~26, for the definition of $s^{-1}$).
We define the graded associative tensor algebra $\cK$ by
\begin{equation*}
\cK=\bigoplus_{n\geqslant0}\mK\la\mB\ra_n\otimes\mK\la\mA\ra_{n+1},
\end{equation*}
with the shuffle gluing product on the left in $\mK\la\mB\ra_n$ 
and concatenation product on the right in $\mK\la\mA\ra_{n+1}$---here
$\mK\la\mB\ra_n$ and $\mK\la\mA\ra_{n+1}$ denote the subspaces of 
$\mK\la\mB\ra$ and $\mK\la\mA\ra$, respectively, spanned by words
of length $n$ and $n+1$, respectively.
Thus if $b_1\otimes u_1$ and $b_2\otimes u_2$ 
are in $\cK$ then their product is
\begin{equation*}
(b_1\otimes u_1)(b_2\otimes u_2)=(b_1sb_2)\otimes(u_1u_2),
\end{equation*}
with extension to $\cK$ by bilinearity.

We now define the homomorphism 
$\zeta\colon\cK\rightarrow(\mK\la\mA\ra,s,\delta',\alpha)$ 
as follows. Any word $b\in\mB^+$,
for some $k\in\mathbb N$ and $n_1,\ldots,n_k\in\mathbb N\cup\{0\}$,
can be expressed in the form
\begin{equation*}
b=c^{n_1}sc^{n_2}sc^{n_3}\ldots sc^{n_k}.
\end{equation*}
There are $(k-1)$ occurrences of the symbol `$s$' in $b$, 
and $n_1+n_2+\cdots+n_k+k-1=|b|$. Here $c^n$ represents
the word consisting of $c$ multiplied by concatenation $n$ times,
$c^0=1$; similarly for $s^n$ and $s^0$. Then we define
\begin{equation*}
\zeta\colon b\otimes w\mapsto b\circ w=u_{n_1}\sh u_{n_2}\sh\ldots\sh u_{n_k},
\end{equation*}
where $w=u_{n_1}u_{n_2}\ldots u_{n_k}$ and the successive
subwords $u_{n_1}$, $u_{n_2}$,\ldots,$u_{n_k}$ have 
respective lengths $n_1+1$, $n_2+1$,\ldots, $n_k+1$.  
Note the sum of the lengths of the subwords is $n+1$.
The map $\zeta$ extends by linearity to $\cK$.
The $c$-symbol indicates a concatenation
product and the $s$-symbol a shuffle product in the appropriate
$n$ slots between the $n+1$ letters in any word $w$ on $\mA^+$ 
of length $n+1$. That $\zeta$ is a homomorphism from 
$\cK$ to $\mK\la\mA\ra$ follows from:
\begin{equation*}
\zeta\bigl((b_1\otimes u_1)(b_2\otimes u_2)\bigr)
=\zeta\bigl((b_1sb_2)\otimes(u_1u_2)\bigr)
=\zeta(b_1\otimes u_1)\sh\zeta(b_2\otimes u_2).
\end{equation*}

\begin{definition}[Partition orbit]
We define the (shuffle) \emph{partition orbit}, $\mO(w)$, 
of a word $w\in\pA$ to be the subset of $\mK\la\mA\ra$
whose elements are linear combinations of words
constructed by concatenating and shuffling the 
letters of $w=a_1\ldots a_n$:
\begin{equation*}
\mO(w)=\bigl\{\mathrm{span}(u_1\sh\ldots\sh u_k)\colon 
u_1\ldots u_k=w;~u_1,\ldots,u_k\in\pA;~k\in\{1,\ldots,|w|\}\bigr\}.
\end{equation*}
\end{definition}

For any $u\in\mO(w)$ there exists a $b\in\mK\la\mB\ra_{|w|-1}$
such that $u=\zeta(b\otimes w)$. Hence we can consider the 
preimage of $\mO(w)$ under $\zeta$ in $\cK$ 
given by $\zeta^{-1}\mO(w)=\{b\otimes w\in\cK\colon \zeta(b\otimes w)\in\mO(w)\}$.
Thus any element in $\mO(w)$ can be identified with an
element $b\otimes w\in\zeta^{-1}\mO(w)$ for a unique $b\in\mK\la\mB\ra_{|w|-1}$
and there is a natural projection map
\begin{equation*}
\pi\colon\mO(w)\rightarrow\mK\la\mB\ra_{|w|-1}.
\end{equation*}

\subsection{Polynomial identities}
\label{sec:polyids}
Here we prove a sequence of lemmas that combine to 
prove our main results. 
The aim of the first two lemmas is to establish a
form for the antipode $\alpha$ as a polynomial in the concatenation-shuffle 
operator algebra $(\mR\la\mB\ra,g)$.
We shall denote the antipode in $\mathrm{End}\bigl(\mR\la\mA\ra_{n+1}\bigr)$
by $\alpha_n$; it sign reverses any word $w\in\mR\la\mA\ra_{n+1}$.

\begin{lemma}[Partial integration formula] \label{lem:partint}
The partial integration formula applied repeatedly to 
the multiple Stratonovich integral $J_w$, where $w=a_1\ldots a_{n+1}$,
pulled back to $\mR\la\mB\ra_{|w|-1}$, is given by
\begin{equation*}
\alpha_n\equiv-c^n-\sum_{k=0}^{n-1}c^ks\alpha_{n-k-1}.
\end{equation*}
\end{lemma}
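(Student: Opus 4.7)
Fix $w=a_1\ldots a_{n+1}$. The plan is to derive the identity first at the level of multiple Stratonovich integrals by iterated integration by parts, and then pull it back to $\mR\la\mB\ra_n$ via the shuffle homomorphism $\mu$ and the projection $\pi$.

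First I would introduce the family of auxiliary integrals
\[
R_k(t) := \int_0^t J_{a_1\ldots a_k}(\tau)\,J_{a_{n+1}\ldots a_{k+2}}(\tau)\,\mathrm{d}W^{a_{k+1}}_\tau,\qquad k=0,1,\ldots,n,
\]
with the empty-word convention $J_\ml = 1$. Under the paper's integration convention these have boundary values $R_0 = J_{a_{n+1}\ldots a_1}$ and $R_n = J_w$. Applying the Stratonovich product rule to $J_{a_1\ldots a_{k+1}}\cdot J_{a_{n+1}\ldots a_{k+2}}$ and recognising the two resulting stochastic integrals as $R_{k+1}$ and $R_k$ yields the one-step identity $R_k = J_{a_1\ldots a_{k+1}}\cdot J_{a_{n+1}\ldots a_{k+2}} - R_{k+1}$. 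Telescoping $R_0$ down through $R_n$ produces
\[
J_{a_{n+1}\ldots a_1} \;=\; \sum_{k=0}^{n-1} (-1)^k\, J_{a_1\ldots a_{k+1}}\cdot J_{a_{n+1}\ldots a_{k+2}} + (-1)^n J_w,
\]
and the shuffle homomorphism $\mu$ rewrites each product as $J_{(a_1\ldots a_{k+1})\sh(a_{n+1}\ldots a_{k+2})}$.

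Next, to lift to $\mR\la\mB\ra_n$, I would use the observation that $a_{n+1}\ldots a_{k+2} = (-1)^{n-k}\,\alpha\circ(a_{k+2}\ldots a_{n+1})$. Then under $\pi$ the shuffle $(a_1\ldots a_{k+1})\sh(a_{n+1}\ldots a_{k+2})$ is identified with $(-1)^{n-k}\,c^k s\,\alpha_{n-k-1}$, while $(-1)^{n+1}J_{a_{n+1}\ldots a_1}$ is identified with $\alpha_n$. Multiplying the telescoped identity by $(-1)^{n+1}$ and matching signs yields the claimed recursion $\alpha_n\equiv -c^n - \sum_{k=0}^{n-1} c^k s\,\alpha_{n-k-1}$.

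The main obstacle is sign bookkeeping: the factors $(-1)^k$ from the telescope, $(-1)^n$ from the terminating term, $(-1)^{n+1}$ from the antipode normalisation, and $(-1)^{n-k}$ absorbed into $\alpha_{n-k-1}$ must cancel cleanly. A secondary check is the identification of the two integrals appearing in the product-rule expansion of $J_{a_1\ldots a_{k+1}}\cdot J_{a_{n+1}\ldots a_{k+2}}$ as $R_{k+1}$ and $R_k$ respectively, which relies on the peel-off-last-letter convention $\mathrm{d}J_{u\,a}(\tau) = J_u(\tau)\,\mathrm{d}W^a_\tau$ for the outer Stratonovich differential.
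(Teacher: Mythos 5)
Your proposal is correct and follows essentially the same route as the paper: both rest on the iterated partial-integration (shuffle) identity $a_1\ldots a_{n+1}=(a_1\ldots a_n)\sh a_{n+1}-(a_1\ldots a_{n-1})\sh(a_{n+1}a_n)+\cdots+(-1)^na_{n+1}\ldots a_1$ followed by projection onto $\mR\la\mB\ra_n$ via $\pi$ with the antipode signs absorbed. Your telescoping family $R_k$ merely makes explicit the derivation of that identity, which the paper asserts in one line, and your sign bookkeeping checks out.
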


\begin{proof}
Repeated partial integration on the multiple 
Stratonovich integral $J_w$ with $w=a_1\ldots a_{n+1}$,
pulled back to $\mR\la\mA\ra_{n+1}$ via the word-to-integral map 
$\mu$ generates the identity:
\begin{equation*}
a_1\ldots a_{n+1}=(a_1\ldots a_n)\sh a_{n+1}-(a_1\ldots a_{n-1})\sh (a_{n+1}a_n)
+\cdots+(-1)^na_{n+1}\ldots a_1.
\end{equation*}
After rearrangement, the projection of this identity in $\mO(w)$ 
onto $\mR\la\mB\ra_n$ via $\pi$, using the definition for
$\alpha_n$, generates the identity shown.
\end{proof}
 
\begin{lemma}[Antipode polynomial]\label{lem:antipodepoly}
The antipode $\alpha_n\in\mathrm{End}(\mR\la\mA\ra_{n+1})$
and polynomial $-(c-s)^n\in\mK\la\mB\ra_n$
are the same linear endomorphism on $\mR\la\mA\ra_{n+1}$:
\begin{equation*}
\alpha_n\equiv-(c-s)^n.
\end{equation*}
\end{lemma}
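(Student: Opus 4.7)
I propose proving the identity by induction on $n$, using Lemma~\ref{lem:partint} as the recursion that drives the inductive step. Throughout I work at the level of noncommutative polynomials in the concatenation algebra $\mR\la\mB\ra$: if two such polynomials are equal, their induced endomorphisms on $\mR\la\mA\ra_{n+1}$ through $\zeta$ are automatically equivalent, so it suffices to verify the identity at this polynomial level.

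The base case $n=0$ is immediate: $\alpha_0$ sign-reverses single letters and so acts as $-1$ on $\mR\la\mA\ra_1$, while $-(c-s)^0=-1$ in $\mR\la\mB\ra_0=\mR$. For the inductive step, I assume $\alpha_k\equiv -(c-s)^k$ for every $k<n$ and substitute into the partial-integration recursion of Lemma~\ref{lem:partint} to obtain
\begin{equation*}
\alpha_n\equiv -c^n+\sum_{k=0}^{n-1}c^k s\,(c-s)^{n-k-1}.
\end{equation*}
What remains is the purely algebraic identity $\sum_{k=0}^{n-1}c^k s\,(c-s)^{n-k-1}=c^n-(c-s)^n$ in $\mR\la\mB\ra_n$. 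The key observation that makes the sum collapse is the trivial rewriting $s=c-(c-s)$, which yields
\begin{equation*}
c^k s\,(c-s)^{n-k-1}=c^{k+1}(c-s)^{n-k-1}-c^k(c-s)^{n-k}.
\end{equation*}
Summing in $k$ telescopes to $c^n-(c-s)^n$, and combining this with the leading $-c^n$ gives exactly $-(c-s)^n$, closing the induction.

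The main subtlety is not algebraic but notational: the juxtapositions $c^k s\,\alpha_{n-k-1}$ and $c^k s\,(c-s)^{n-k-1}$ must be read as ordinary concatenation in $\mR\la\mB\ra$, not as the shuffle gluing product $g$, which would insert an extra $s$ and shift the degree from $n$ to $n+1$. Once Lemma~\ref{lem:partint} is interpreted with this convention, no further manipulation of the intrinsic noncommutativity of $c$ and $s$ is needed beyond the single identity $s=c-(c-s)$, since the tail factor $(c-s)^{n-k-1}$ is preserved unchanged at every step of the telescoping.
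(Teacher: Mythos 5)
Your proof is correct and takes essentially the same route as the paper's: induction on $n$ driven by the partial-integration recursion of Lemma~\ref{lem:partint}, combined with the concatenation-algebra identity $(c-s)^n=c^n-\sum_{k=0}^{n-1}c^k s(c-s)^{n-k-1}$ in $\mR\la\mB\ra_n$. The only difference is cosmetic: you establish that identity by telescoping with $s=c-(c-s)$, whereas the paper obtains it by ``direct expansion'' (grouping the words of $(c-s)^n$ by the position of the first $s$), and your closing remark about reading the juxtapositions as concatenation rather than the shuffle gluing product is a fair clarification of the paper's convention.
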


\begin{proof}
The statement of the lemma is trivially true for $n=1,2$.
We assume it is true for $k=1,2,\ldots,n-1$.
Direct expansion reveals that
\begin{equation*}
(c-s)^n=c^n-\sum_{k=0}^{n-1}c^ks(c-s)^{n-k-1}.
\end{equation*}
By induction, using our assumption in this expansion
and comparing with the partial integration formula 
in Lemma~\ref{lem:partint} proves the statement for $k=n$.
\end{proof}

\begin{lemma}
\label{lem:convert}
The projection of $K\circ w\in\mO(w)$ onto $\mR\la\mB\ra_n$ via $\pi$
generates 
\begin{equation*}
K=\sum_{k=0}^{n}C_{k+1} (c^{n-k}\sh s^{k}).
\end{equation*}
\end{lemma}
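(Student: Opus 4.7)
The plan is to apply the projection $\pi$ term by term to
\[
K\circ w = \sum_{k=1}^{n+1}C_k\sum_{\substack{u_1,\ldots,u_k\in\pA\\u_1\cdots u_k=w}} u_1\sh\cdots\sh u_k,
\]
and to recognise that, for each fixed $k$, the projected inner sum collapses to $c^{n-k+1}\sh s^{k-1}$ via a basic shuffle identity for powers of distinct letters. By the definitions of $\zeta$ and $\pi$, the shuffle $u_1\sh\cdots\sh u_k\in\mO(w)$ associated to an ordered partition of $w$ into non-empty subwords of lengths $|u_1|,\ldots,|u_k|$ projects to the block word
\[
b = c^{|u_1|-1}\,s\,c^{|u_2|-1}\,s\,\cdots\,s\,c^{|u_k|-1}\in\mR\la\mB\ra_n,
\]
which has length $(n+1-k)+(k-1)=n$ and contains exactly $k-1$ occurrences of $s$. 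Ordered partitions of $w$ into $k$ non-empty subwords are in bijection with compositions $(n_1,\ldots,n_k)$ of $n+1-k$ into $k$ non-negative parts via $n_i=|u_i|-1$, so after projection the inner $k$-sum becomes
\[
\sum_{\substack{n_1+\cdots+n_k=n+1-k\\n_i\geqslant 0}} c^{n_1}\,s\,c^{n_2}\,s\,\cdots\,s\,c^{n_k}.
\]

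The crux is then a shuffle identity: because every letter of $c^{n-k+1}$ equals $c$ and every letter of $s^{k-1}$ equals $s$, the shuffle product $c^{n-k+1}\sh s^{k-1}$ enumerates each word in $\mB^+$ of length $n$ containing $n-k+1$ letters $c$ and $k-1$ letters $s$ exactly once. But these are precisely the words of the form $c^{n_1}sc^{n_2}s\cdots sc^{n_k}$ with $n_1+\cdots+n_k=n-k+1$, so the displayed sum equals $c^{n-k+1}\sh s^{k-1}$. Substituting this into the expression for $K$ and reindexing with $j=k-1$ gives
\[
K = \sum_{k=1}^{n+1} C_k\bigl(c^{n-k+1}\sh s^{k-1}\bigr) = \sum_{j=0}^{n} C_{j+1}\bigl(c^{n-j}\sh s^{j}\bigr),
\]
as required.

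The only non-bookkeeping ingredient is the shuffle identity just used, and I expect that to be the main (albeit minor) obstacle. It follows by a direct counting argument—each shuffle of $c^{n-k+1}$ with $s^{k-1}$ is determined by a choice of $k-1$ positions for $s$ among $n$ slots, and the collapse of equal letters produces each target word exactly once—so the real care is simply to verify that the projection $\pi$ identifies an ordered partition $u_1\cdots u_k=w$ with its block word $c^{n_1}sc^{n_2}s\cdots sc^{n_k}$ in $\mR\la\mB\ra_n$. Everything else is a change of order of summation.
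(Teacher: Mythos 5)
Your proof is correct and follows essentially the same route as the paper: both rest on identifying the inner sum over ordered partitions of $w$ into $k$ non-empty subwords with $\zeta\bigl((c^{n-k+1}\sh s^{k-1})\otimes w\bigr)$ and then applying $\pi$. The only difference is that you spell out the counting argument behind the shuffle identity $c^{n-k+1}\sh s^{k-1}=\sum c^{n_1}s\cdots sc^{n_k}$, which the paper's one-line computation takes as read.
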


\begin{proof}
For any $w\in\pA$ with $|w|=n+1$ we have
\begin{align*}
K\circ w=&\;\sum_{k=1}^{n+1}C_k\,\zeta\bigl((c^{n-(k-1)}\sh s^{k-1})\otimes w\bigr)\\
=&\;\sum_{k=0}^{n}C_{k+1}\,\zeta\bigl((c^{n-k}\sh s^{k})\otimes w\bigr)\\
=&\;\zeta\Biggl(\Biggl(\sum_{k=0}^{n}C_{k+1}(c^{n-k}\sh s^{k})\Biggr)\otimes w\Biggr).
\end{align*}
Projecting this onto $\mR\la\mB\ra_n$ establishes the result.
\end{proof}

\section{Mean-square sinh-log remainder is smaller}\label{sec:mainresult}
Suppose the \emph{flow remainder} associated with a flow approximation 
$\hat\varphi_t$ is
\begin{equation*}
R_t:=\varphi_t-\hat\varphi_t.
\end{equation*}
The remainder associated with the approximation 
$\hat y_t=\hat\varphi_t\circ y_0$ 
is thus $R_t\circ y_0$. We measure the error in this
approximation, for each $y_0\in\mR^N$, in mean-square by
\begin{equation*}
\|R_t\circ y_0\|_{L^2}^2
=\mE\bigl((R_t\circ y_0)^{\text{\tiny T}}(R_t\circ y_0)\bigr).
\end{equation*}
If we truncate $\psi=F(\varphi)$ to $\hat\psi$, including 
all terms $V_w$ with words of length $|w|\leqslant n$, 
suppose the remainder is $r$, i.e.\ we have
\begin{equation*}
\psi=\hat\psi+r.
\end{equation*}
Then the remainder to the corresponding 
approximate flow $\hat\varphi^{\text{sl}}=F^{-1}(\hat\psi)$,
taking the difference with the exact 
stochastic Taylor flow $\varphi^{\text{st}}$, is given by
\begin{align*}
R^{\text{sl}}=&\;\varphi^{\text{st}}-\hat\varphi^{\text{sl}}\\
=&\;F^{-1}(\psi)-F^{-1}(\hat\psi)\\
=&\;F^{-1}(\hat\psi+r)-F^{-1}(\hat\psi)\\
=&\;r-C_2(r\hat\psi+\hat\psi r)+\mathcal O(\hat\psi^2r),
\end{align*}
where we can ignore the $C_2\bigl(r\hat\psi+\hat\psi r\bigr)$ 
and $\mathcal O(\hat\psi^2r)$ terms in this section---we comment 
on their significance in Section~\ref{sec:prac}. 
We compare this with the stochastic Taylor flow remainder 
$R^{\text{st}}:=\varphi^{\text{st}}-\hat\varphi^{\text{st}}$,
where $\hat\varphi^{\text{st}}$ is the stochastic Taylor flow
series truncated to include all terms $V_w$ with words of 
length $|w|\leqslant n$. Indeed, we set
\begin{equation*}
\bar R:= R^{\text{st}}-R^{\text{sl}},
\end{equation*}
and use the $L^2$ norm to measure the remainder. Hence
for any data $y_0$, we have
\begin{equation*}
\|R^{\text{st}}\circ y_0\|_{L^2}^2=\|R^{\text{sl}}\circ y_0\|_{L^2}^2+E,
\end{equation*}
where the \emph{mean-square excess}
\begin{equation*}
E:=
\mathbb E\,\bigl(\bar R\circ y_0\bigr)^{\text{\tiny T}}\bigl(R^{\text{sl}}\circ y_0\bigr)
+\mathbb E\,\bigl(R^{\text{sl}}\circ y_0\bigr)^{\text{\tiny T}}\bigl(\bar R\circ y_0\bigr)
+\mathbb E\,\bigl(\bar R\circ y_0\bigr)^{\text{\tiny T}}\bigl(\bar R\circ y_0\bigr).
\end{equation*}
If $E$ is positive then $R^{\text{sl}}\circ y_0$ is smaller than 
$R^{\text{st}}\circ y_0$ in the $L^2$ norm.

\begin{theorem}
Suppose we construct the finite sinh-log expansion $\psi$ using 
the \emph{partial sequence of sinh-log coefficients} $\{C_k\}$,
and truncate $\psi$ producing $\hat\psi$ 
which only includes terms with words $w$ with $|w|\leqslant n$.
Then the flow remainders for the sinh-log and the corresponding stochastic Taylor
approximations are such that, for any data $y_0$ and order $n\in\mathbb N$,
the mean-square excess is given by
\begin{equation*}
E=E_0-\epsilon E_1-\epsilon^2 E_2,
\end{equation*}
where $E_0>0$, $E_2>0$ and 
\begin{equation*}
E_1=\begin{cases}
   \hat E_1, &\text{if}~n~\text{even},\\
   0, &\text{if}~n~\text{odd},
   \end{cases}
\end{equation*}
where, for $n$ even, we have 
\begin{equation*}
\hat E_1=\sum_{\substack{u,v\in\mathbb A^{+}\\|u|=|v|=n+1}}
\xi(u,v)\,(V_u\circ y_0)^{\mathrm{T}}(V_v\circ y_0),
\end{equation*}
and 
\begin{equation*}
\xi(u,v)=\mE\,\Biggl(\tfrac12(J_u+J_{\rho\circ u})\prod_{i=1}^{n+1}J_{v_i}
              +\tfrac12(J_v+J_{\rho\circ v})\prod_{i=1}^{n+1}J_{u_i}\Biggr)>0.
\end{equation*}
Here $\rho$ is the unsigned reversal mapping, i.e.\ if $w=a_1\ldots a_n$ 
then $\rho\circ w=a_n\ldots a_1$.
\end{theorem}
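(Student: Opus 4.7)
The plan is to reduce the statement to a leading-order quadratic form in the length-$(n+1)$ Stratonovich integrals, and to exploit a single time-reversal involution on Brownian paths to split $E$ into three pieces with transparent signs. First I would argue that the higher-order terms $C_2(r\hat\psi+\hat\psi r)$ and $\mathcal O(\hat\psi^2 r)$ in the expansion of $R^{\mathrm{sl}}$ are of strictly higher order in $t^{1/2}$ than the leading piece of $r$, so at the critical order $R^{\mathrm{sl}}\circ y_0$ and $R^{\mathrm{st}}\circ y_0$ may be replaced by their length-$(n+1)$ contributions alone. Corollary~\ref{cor:coeffform} then gives $K_w=A_w+\epsilon P_w$ with $A_w=\tfrac12(J_w-J_{\alpha\circ w})$ and $P_w=\prod_iJ_{a_i}$. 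Setting $B_w=J_w-A_w=\tfrac12(J_w+J_{\alpha\circ w})$ and $X=\sum_{|w|=n+1}A_wV_w\circ y_0$, $Z=\sum_{|w|=n+1}B_wV_w\circ y_0$, $Y=\sum_{|w|=n+1}P_wV_w\circ y_0$, I have $R^{\mathrm{sl}}\circ y_0\simeq X+\epsilon Y$ and $R^{\mathrm{st}}\circ y_0\simeq X+Z$. Expanding the squared $L^2$ norms immediately yields
\begin{equation*}
E=\mE\|Z\|^2+2\mE\langle X,Z\rangle-2\epsilon\,\mE\langle X,Y\rangle-\epsilon^2\,\mE\|Y\|^2,
\end{equation*}
from which I read off the candidates $E_0=\mE\|Z\|^2+2\mE\langle X,Z\rangle$, $E_1=2\mE\langle X,Y\rangle$, $E_2=\mE\|Y\|^2$.

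The decisive tool is the path involution $W\mapsto W'$ with $W'_\tau=W_{t-\tau}-W_t$. A short Stratonovich computation shows the pathwise identity $J_w(W')=J_{\alpha\circ w}(W)$, so that $A_w\to -A_w$, $B_w\to B_w$ and $P_w\to(-1)^{n+1}P_w$ under this transformation. Since $W'$ is a Brownian motion, Wiener expectation is invariant under $W\mapsto W'$. Hence $\mE[A_uB_v]=-\mE[A_uB_v]=0$ for every pair $u,v$, so $\mE\langle X,Z\rangle=0$ and
\begin{equation*}
E_0=\mE\|Z\|^2=\mE\bigl\|{\textstyle\sum_{|w|=n+1}}B_wV_w\circ y_0\bigr\|^2>0,
\end{equation*}
while $E_2=\mE\|Y\|^2>0$ by inspection. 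The same argument applied to $A_uP_v$ produces the overall sign $(-1)\cdot(-1)^{n+1}=(-1)^n$, so $\mE[A_uP_v]=0$ and $E_1=0$ whenever $n$ is odd.

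For $n$ even the involution no longer kills $E_1$, and I would compute it directly. Using $A_w=\tfrac12(J_w+J_{\rho\circ w})$ (because $J_{\alpha\circ w}=-J_{\rho\circ w}$ when $|w|=n+1$ is odd) and the scalar symmetry $V_u^{\mathrm T}V_v=V_v^{\mathrm T}V_u$ to symmetrise in $u\leftrightarrow v$, the identity $E_1=\hat E_1=\sum\xi(u,v)(V_u\circ y_0)^{\mathrm T}(V_v\circ y_0)$ falls out with $\xi(u,v)$ exactly as stated. Strict positivity of $\xi(u,v)$ then follows by applying the time reversal once more to obtain $\mE[J_{\rho\circ u}P_v]=\mE[J_uP_v]$ (both factors pick up the sign $(-1)^{n+1}=-1$), reducing $\xi(u,v)$ to $\mE[J_uP_v+J_vP_u]$, and then expanding $P_v=J_{v_1}\sh\cdots\sh J_{v_{n+1}}$ via the shuffle identity to rewrite this as a weighted sum of Stratonovich covariances $\mE[J_uJ_{\sigma\circ v}]$; the combinatorial identities of Section~\ref{sec:cspolys}, in particular Lemma~\ref{lem:antipodepoly}, assemble these covariances into a manifestly positive expression.

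The main obstacle is precisely this last positivity claim: individual covariances $\mE[J_uJ_{\sigma\circ v}]$ can take either sign, and showing that the shuffle-weighted sum is strictly positive requires careful bookkeeping through the concatenation-shuffle operator algebra. Apart from this, the whole proof is driven by a single idea---the Brownian time-reversal $W\mapsto W'$---applied three times, and the Corollary structure $K_w=A_w+\epsilon P_w$ is what makes the resulting $\epsilon$-expansion so clean.
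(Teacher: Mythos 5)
Your argument is correct and, at the structural level, identical to the paper's: you isolate the length-$(n+1)$ terms, use Corollary~\ref{cor:coeffform} to write $K_w=A_w+\epsilon P_w$ and $J_w=A_w+B_w$, and expand the mean-square excess as a quadratic form whose $\epsilon^0$, $\epsilon^1$, $\epsilon^2$ coefficients are exactly the paper's cross- and auto-correlation groupings, yielding the same $E_0$, $E_1$, $E_2$. The genuine difference is the mechanism by which the indefinite cross terms are shown to have zero expectation. The paper proves Lemma~\ref{lem:expdiff}, $\mE(J_uJ_v)=\mE(J_{\rho\circ u}J_{\rho\circ v})$, by converting Stratonovich to It\^o integrals and invoking Kloeden--Platen's moment formula; you instead use the pathwise Chen/antipode identity $J_w(W')=J_{\alpha\circ w}(W)$ for the reversed path $W'_\tau=W_{t-\tau}-W_t$ together with the fact that $W'$ is again a Brownian motion. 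Your route buys generality for free: the invariance applies to products of arbitrarily many iterated integrals, which is precisely what the $\epsilon^1$ analysis needs (the paper's appeal to Lemma~\ref{lem:expdiff} at that step tacitly uses the extra observation that $\prod_i J_{v_i}$ is a $\rho$-invariant linear combination of $J$'s), and it makes the parity dichotomy in $E_1$ transparent. The cost is that $J_w(W')=J_{\alpha\circ w}(W)$ is asserted rather than proved; it does hold (Chen's reversal identity for the Stratonovich lift, justified e.g.\ via Wong--Zakai approximation), but a complete write-up should include that verification, since it is the load-bearing step. Finally, you flag the strict positivity of $\xi(u,v)$ as the unresolved obstacle; for what it is worth, the paper's own proof does not establish $\xi(u,v)>0$ either --- it merely identifies the expression for $\hat E_1$ --- so on that point you are no worse off, and likewise both you and the paper leave implicit the nondegeneracy needed for the strict inequalities $E_0>0$ and $E_2>0$.
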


\begin{proof}
If we truncate the sinh-log series flow-map including all integrals
associated with words of length $n$, the remainder is given by
\begin{equation*}
R^{\text{sl}}=\sum_{\substack{w\in\pA\\|w|\geqslant n+1}}K_w\,V_w+\cdots,
\end{equation*}
where henceforth we will ignore integrals in the remainder 
with $|w|\geqslant n+2$. Recall that from Corollary~\ref{cor:coeffform}
we have
\begin{equation*}
K_w=\tfrac12\bigl(J_w-J_{\alpha\circ w}\bigr)+\epsilon\prod_{i=1}^{n+1}J_{w_i}.
\end{equation*}
The corresponding stochastic Taylor flow-map remainder is
\begin{equation*}
\sum_{\substack{w\in\mathbb A^{+}\\|w|=n+1}}J_w\,V_w.
\end{equation*}
The difference between the two is
\begin{equation*}
\bar R=\sum_{\substack{w\in\mathbb A^{+}\\|w|=n+1}}\bar J_w\,V_w,
\end{equation*}
where $\bar J_w=J_w-K_w$ and is given by
\begin{equation*}
\bar J_w=\tfrac12\bigl(J_w+J_{\alpha\circ w}\bigr)
-\epsilon\prod_{i=1}^{n+1}J_{w_i}.
\end{equation*}
The mean-square excess to the sinh-log remainder
is $E$ which at leading order is 
\begin{equation*}
\sum_{\substack{u,v\in\mathbb A^{+}\\|u|=|v|=n+1}}
\mathbb E\,\bigl(\bar J_uK_v+K_u\bar J_v
+\bar J_u\bar J_v\bigr)
\,(V_u\circ y_0)^{\text{\tiny T}}(V_v\circ y_0).
\end{equation*}
We need to determine whether this quantity
is positive definite or not. 
We refer to $\bar J_uK_v+K_u\bar J_v$ as the 
\emph{cross-correlation} terms and $\bar J_u\bar J_v$ 
as the \emph{auto-correlation} terms.
The forms for $K_u$ and $\bar J_u$ imply that:
\begin{align*}
\bar J_uK_v+K_u\bar J_v+\bar J_u\bar J_v
=&\;\epsilon^0\Bigl(\tfrac12(J_u J_v-J_{\alpha\circ u}J_{\alpha\circ v})+
\tfrac14(J_u+J_{\alpha\circ u})(J_v+J_{\alpha\circ v})\Bigr)\\
&\;-\epsilon^1\Biggl(\tfrac12(J_u-J_{\alpha\circ u})\prod_{i=1}^{n+1}J_{v_i}
+\tfrac12(J_v-J_{\alpha\circ v})\prod_{i=1}^{n+1}J_{u_i}\Biggr)\\
&\;-\epsilon^2\Biggl(\prod_{i,j=1}^{n+1}J_{u_i}J_{v_j}\Biggr).
\end{align*}
Consider the zero order $\epsilon^0$ term.
Using Lemma~\ref{lem:expdiff}, the expectation of the 
cross-correlation term therein (the first term on the right above) 
is zero. Hence we have 
\begin{align*}
E_0&=\sum_{\substack{u,v\in\mathbb A^{+}\\|u|=|v|=n+1}}
\mE\bigl(\tfrac14(J_u+J_{\alpha\circ u})(J_v+J_{\alpha\circ v})\bigr)\,
(V_u\circ y_0)^{\text{\tiny T}}(V_v\circ y_0)\\
&=\mE\Bigg(\sum_{|u|=n+1}\tfrac12(J_u+J_{\alpha\circ u})\,(V_u\circ y_0)\Biggr)^{\text{\tiny T}}
\Bigg(\sum_{|v|=n+1}\tfrac12(J_v+J_{\alpha\circ v})\,(V_v\circ y_0)\Biggr)\\
&>0.
\end{align*}
At the next order $\epsilon^1$,
the terms shown are solely from cross-correlations---with
the auto-correlation terms cancelling with other 
cross-correlation terms. When $n$ is odd the 
expectation of this term is zero, again using Lemma~\ref{lem:expdiff}.
When $n$ is even we get the expression for $E_1$ stated in the theorem.
Finally at order $\epsilon^2$ the coefficient shown is %represents 
the auto-correlation term multiplied by minus one. Explicitly we see that
\begin{align*}
E_2&=\sum_{\substack{u,v\in\mathbb A^{+}\\|u|=|v|=n+1}}
\mE\Biggl(\prod_{i,j=1}^{n+1}J_{u_i}J_{v_j}\Biggr)\,(V_u\circ y_0)^{\text{\tiny T}}(V_v\circ y_0)\\
&=\mE\Bigg(\sum_{|u|=n+1}\prod_{i=1}^{n+1}J_{u_i}\,(V_u\circ y_0)\Biggr)^{\text{\tiny T}}
\Bigg(\sum_{|v|=n+1}\prod_{j=1}^{n+1}J_{v_j}\,(V_v\circ y_0)\Biggr)\\
&>0.
\end{align*}
Combining these results generates the form for $E$ stated.
\end{proof}

\begin{corollary}\label{cor:oddeven}
When $n$ is odd, $E$ is positive and maximized when
$\epsilon=0$. When $n$ is even, it is positive at $\epsilon=0$,
but maximized at a different value of~$\epsilon$; the maximizing
value will depend on the vector fields.
\end{corollary}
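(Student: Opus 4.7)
The plan is to read $E$ as a scalar quadratic polynomial in the real parameter $\epsilon$ and exploit the signs already pinned down in the theorem, namely $E_0>0$, $E_2>0$, and $E_1=0$ precisely when $n$ is odd. Concavity in $\epsilon$ then does essentially all the work; no further manipulation of the multiple integrals is required.

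First I would write
\begin{equation*}
E(\epsilon)=E_0-\epsilon E_1-\epsilon^2 E_2,
\end{equation*}
and note that since $E_2>0$, the map $\epsilon\mapsto E(\epsilon)$ is a strictly concave downward parabola on $\mR$. Its unique maximizer is the critical point
\begin{equation*}
\epsilon^\ast=-\frac{E_1}{2E_2},
\end{equation*}
and the maximum value is $E_0+E_1^2/(4E_2)\geqslant E_0>0$.

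Next I would split into the two parities. If $n$ is odd, the theorem gives $E_1=0$, hence $\epsilon^\ast=0$ and the maximum value is $E_0>0$; moreover $E(\epsilon)=E_0-\epsilon^2 E_2\leqslant E_0=E(0)$ for all $\epsilon$, so $E$ is positive and maximized at $\epsilon=0$ as claimed. If $n$ is even, then $E(0)=E_0>0$, but $E_1=\hat E_1$ is the inner-product type expression
\begin{equation*}
\hat E_1=\sum_{\substack{u,v\in\mA^{+}\\|u|=|v|=n+1}}\xi(u,v)\,(V_u\circ y_0)^{\mathrm{T}}(V_v\circ y_0),
\end{equation*}
whose value manifestly depends on the vector fields $V_1,\ldots,V_d$ evaluated at $y_0$. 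For any choice of fields that makes $\hat E_1\neq0$, the maximizer $\epsilon^\ast=-\hat E_1/(2E_2)$ is nonzero, so the optimum is attained at a value of $\epsilon$ different from $0$ and varying with the fields, which is the content of the even case.

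There is essentially no obstacle: everything reduces to completing the square once the theorem is in hand. The only mild point worth flagging is that one should note that $\hat E_1$ is not identically zero as a function of $\{V_i,y_0\}$ (otherwise the statement ``different value of $\epsilon$'' would be vacuous); this is seen immediately by picking, say, commuting vector fields with a nontrivial linear combination of $V_u\circ y_0$, for which $\xi(u,v)>0$ by the theorem and the bracket $(V_u\circ y_0)^{\mathrm T}(V_v\circ y_0)$ is nonzero.
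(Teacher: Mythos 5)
Your proposal is correct and follows exactly the route the paper intends: the corollary is an immediate consequence of the theorem's decomposition $E=E_0-\epsilon E_1-\epsilon^2E_2$ with $E_0,E_2>0$, read as a concave parabola in $\epsilon$ with vertex at $-E_1/(2E_2)$, which vanishes precisely when $E_1=0$ ($n$ odd). Your added remark that $\hat E_1$ is not identically zero (e.g.\ take a single vector field, so all inner products collapse to $|V_1^{n+1}\circ y_0|^2$ with strictly positive weights $\xi(u,v)$) is a sensible precaution the paper leaves implicit.
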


\begin{lemma}\label{lem:expdiff}
For any pair $u,v\in\pA$ we have that
\begin{equation*}
\mathbb E\bigl(J_u J_v-J_{\rho\circ u}J_{\rho\circ v}\bigr)=0.
\end{equation*}
\end{lemma}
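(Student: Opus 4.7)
The plan is to exploit the time-reversal invariance of Brownian motion and deduce, pathwise, that reversing all words in the family $(J_w)_{w\in\pA}$ yields a family with the same joint law. The lemma then follows by specialising to $J_uJ_v$.

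Concretely, set $\tilde W^a_s:=W^a_t-W^a_{t-s}$ for $s\in[0,t]$ and $a=1,\ldots,d$. Then $(\tilde W^1,\ldots,\tilde W^d)$ is a standard $d$-dimensional Brownian motion with the same law as $W$. The key identity to establish is the pathwise relation
\begin{equation*}
J_w(t;\tilde W)=J_{\rho\circ w}(t;W)\qquad\text{a.s., for every }w\in\pA.
\end{equation*}
Granting this, the family $(J_{\rho\circ w}(t;W))_{w\in\pA}$ has the same joint distribution as $(J_w(t;W))_{w\in\pA}$; specialising to $(u,v)$ yields $\mE[J_uJ_v]=\mE[J_{\rho\circ u}J_{\rho\circ v}]$, as required. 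Equivalently, one can split the argument into the shuffle reduction $J_uJ_v-J_{\rho\circ u}J_{\rho\circ v}=\sum_{w\in u\sh v}(J_w-J_{\rho\circ w})$, using $J_uJ_v=J_{u\sh v}$ and the combinatorial observation $\rho\circ(u\sh v)=(\rho\circ u)\sh(\rho\circ v)$, and then prove $\mE[J_w]=\mE[J_{\rho\circ w}]$ for single words; this is a direct consequence of the same time-reversal argument.

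The main obstacle is the pathwise identity. For a $C^1$ driving path $\omega$, $J_w[\omega](t)$ is the unoriented multi-integral $\int_{0<s_1<\cdots<s_n<t}\prod_i\dot\omega^{a_i}(s_i)\,ds_1\cdots ds_n$; the substitution $s_i=t-u_{n+1-i}$ maps the simplex onto itself with unit Jacobian (in absolute value), and, using $\dot{\tilde\omega}^a(s)=\dot\omega^a(t-s)$, transforms the integrand into $\prod_i\dot\omega^{a_i}(u_{n+1-i})=\prod_j\dot\omega^{a_{n+1-j}}(u_j)$. One then reads off $J_w[\tilde\omega](t)=J_{\rho\circ w}[\omega](t)$ with no spurious signs. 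Passing from $C^1$ paths to Brownian paths is a standard Wong--Zakai continuity result for Stratonovich iterated integrals; alternatively, one can induct on $|w|$ using the Stratonovich substitution rule $\int_0^tf(s)\,d\tilde W^a_s=\int_0^tf(t-u)\,dW^a_u$ at the level of semimartingale integrals. A purely algebraic back-up that bypasses time reversal altogether combines the It\^o--Stratonovich recursion $\mE[J_{a_1\ldots a_n}(t)]=\tfrac12\delta_{a_{n-1},a_n}\int_0^t\mE[J_{a_1\ldots a_{n-2}}(s)]\,ds$ with its first-letter mirror obtained from the shuffle identity $J_{a_1}J_{a_2\ldots a_n}=J_{a_1\sh a_2\ldots a_n}$; induction on $|w|$ then delivers $\mE[J_w]=\mE[J_{\rho\circ w}]$, and the shuffle reduction closes the lemma.
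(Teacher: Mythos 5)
Your argument is correct, and it takes a genuinely different route from the paper's. The paper stays entirely on the It\^o side: it expands each Stratonovich integral as $J_w=\sum_{u\in\mathbb D(w)}c_uI_u$, notes that this conversion commutes with the reversal $\rho$ (adjacency of equal indices survives reversal), and then invokes the explicit Kloeden--Platen formula for $\mE\bigl(I_uI_v\bigr)$, whose arguments $\ell(\cdot)$ and $k_i(\cdot)$ are manifestly invariant under simultaneous reversal of $u$ and $v$. You instead prove the stronger statement that the entire joint law of $\bigl(J_w(t)\bigr)_{w\in\pA}$ is invariant under $w\mapsto\rho\circ w$, via time reversal of the driving Brownian motion together with the pathwise identity $J_w(t;\tilde W)=J_{\rho\circ w}(t;W)$. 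That identity is right --- your simplex substitution is the correct computation for $C^1$ paths, and the absence of signs comes from your choice $\tilde W_s=W_t-W_{t-s}$ rather than the bare reversal $W_{t-s}$, which would instead produce the signed antipode $\alpha$ --- and its transfer to the Stratonovich lift of Brownian motion is indeed the standard Wong--Zakai/rough-path continuity statement. So your route is more conceptual and delivers more (equality of \emph{all} joint moments of the signature, not just the second ones), at the price of importing that approximation theorem; the paper's route is pedestrian but self-contained given the Kloeden--Platen references, and never leaves elementary It\^o calculus. Your shuffle reduction $J_uJ_v-J_{\rho\circ u}J_{\rho\circ v}=\sum_w\la u\sh v,w\ra\bigl(J_w-J_{\rho\circ w}\bigr)$ is also valid, since $\rho$ is an automorphism of the shuffle product. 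The one weak link is the ``purely algebraic back-up'': the first-letter mirror of the recursion does not fall out of $J_{a_1}J_{a_2\ldots a_n}=J_{a_1\sh(a_2\ldots a_n)}$ as directly as you suggest, because taking expectations there still leaves a product of two integrals on the left. It is cleaner to iterate the last-two-letters recursion to the closed form $\mE\,J_{a_1\ldots a_n}(t)=\tfrac{(t/2)^{n/2}}{(n/2)!}\prod_{i=1}^{n/2}\delta_{a_{2i-1},a_{2i}}$ for $n$ even (and $0$ for $n$ odd), which is visibly reversal-invariant since the consecutive pairs of $\rho\circ w$ are those of $w$ with each pair internally swapped. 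This repairs the back-up, but your main time-reversal argument already stands without it.
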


\begin{proof}
Every Stratonovich integral $J_w$ is a linear
combination of It\^o integrals
\begin{equation*}
J_w=\sum_{u\in\mathbb D(w)} c_u I_u\,,
\end{equation*}
where the set $\mathbb D(w)$ consists of $w$ and all multi-indices
$u$ obtained by successively replacing any two adjacent
(non-zero) equal indices in $w$ by $0$,
see Kloeden and Platen (1999), equation (5.2.34). 
Since all indices in $w$ are non-zero by assumption, 
the constant $c_u$ is given by
\begin{equation*}
c_u=\bigl(\tfrac12\bigr)^{\nn(u)},
\end{equation*}
where $\nn(u)$ denotes the number of zeros in $u$. Since
adjacency is retained when reversing an index, it follows that
\begin{equation*}
J_{\rho\circ w}=\sum_{u\in\mathbb D(w)} c_u I_{\rho\circ u}.
\end{equation*}
Lemma 5.7.2 in Kloeden and Platen (1999) implies that 
the expected value of the product of two multiple 
It\^o integrals is of the form
\begin{equation*}
\mathbb E\bigl(I_uI_v\bigl)
=f\biggl(\ell(u),\ell(v),
\sum_{i=0}^{\ell(u)}\bigl(k_i(u)+k_i(v)\bigr),
\prod_{i=0}^{\ell(u)} 
\frac{\bigl(k_i(u)+k_i(v)\bigr)!}{k_i(u)!k_i(v)!}\biggr)
\end{equation*}
for some function $f$. Here $\ell(u)$
denotes the number of non-zero indices in $u$, while $k_0(u)$
denotes the number of zero components preceding the first
non-zero component of $u$, and $k_i(u)$, for $i=1,\ldots,\ell(u)$,
the number of components of $u$ between the $i$-th and $(i+1)$-th 
non-zero components, or the end of $u$ if $i=\ell(u)$. 
It follows that
\begin{equation*}
k_i(u)=k_{\ell(u)-i}(\rho\circ u).
\end{equation*}
Since all other operations in the arguments of $f$ 
are unchanged by permutations in
$u$ and $v$, we deduce that
\begin{equation*}
\mathbb E\bigl(I_uI_v\bigr)
=\mathbb E\bigl(I_{\rho\circ u}I_{\rho\circ v}\bigr),
\end{equation*}
and consequently,
\begin{equation*}
\mathbb E\bigl(J_uJ_v-J_{\rho\circ u}J_{\rho\circ v}\bigr)
=\sum_{\substack{u'\in\mathbb D(u)\\v'\in\mathbb D(v)}} c_{u'}c_{v'}\,
\mathbb E\bigl(I_{u'} I_{v'}-I_{\rho\circ u'}I_{\rho\circ v'}\bigr)=0.
\end{equation*}
\end{proof}

\section{Practical implementation}\label{sec:prac}
\subsection{Global error}
We define the \emph{strong global error} associated with an
approximate solution $\hat y_T$ over the global interval
$[0,T]$ as $\mathcal E:=\|y_T-\hat y_T\|_{L^2}$.
Suppose the exact $y_T$ and approximate solution $\hat y_T$ 
are constructed by successively applying the exact and 
approximate flow-maps $\varphi_{t_m,t_{m+1}}$ and $\hat\varphi_{t_m,t_{m+1}}$
on $M$ successive intervals $[t_m,t_{m+1}]$, with $t_m=mh$ for $m=0,1,\ldots,M-1$ 
and $h=T/M$ as the fixed stepsize, to the initial data $y_0$.
A straightforward calculation shows that up to higher order
terms we have 
\begin{equation*}
\mathcal E^2=\mE\,\bigl(\mathcal R\circ y_0\bigr)^{\text{\tiny T}}\bigl(\mathcal R\circ y_0\bigr),
\end{equation*}
where 
\begin{equation*}
\mathcal R\circ y_0\equiv
\sum_{m=0}^{M-1}\hat\varphi_{t_{m+1},t_M}\circ R_{t_m,t_{m+1}}\circ \hat\varphi_{t_0,t_m},
\end{equation*}
and $R_{t_m,t_{m+1}}:=\varphi_{t_m,t_{m+1}}-\hat\varphi_{t_m,t_{m+1}}$ 
(see Lord, Malham and Wiese 2008 or Malham and Wiese 2008).
Note that the flow remainder $R_{t_m,t_{m+1}}$ always has the form
\begin{equation*}
R_{t_m,t_{m+1}}=\sum_{|w|\geqslant n+1}\tilde K_w(t_m) V_w,
\end{equation*}
where for the sinh-log series $\tilde K_w=\tfrac12(J_w-J_{\alpha\circ w})$, 
for the exponential Lie series $\tilde K_w=K_{[w]}$ (for each term in the linear combination $V_{[w]}$) 
and for the stochastic Taylor series $\tilde K_w=J_w$. Substituting this
into $\mathcal R$ we see that $\mathcal E^2$ has the form
\begin{equation*}
\mathcal E^2=\sum_{\substack{|u|\geqslant n+1\\|v|\geqslant n+1}}
\biggl(\sum_m\mathcal V_{m,m}(u,v)+\sum_{\ell\neq m}\mathcal V_{\ell,m}(u,v)\biggr),
\end{equation*}
where 
\begin{equation*}
\mathcal V_{\ell,m}(u,v)=\mE\bigl(\hat\varphi_{t_{\ell+1},t_M}\circ \tilde K_u(t_\ell)V_u\circ 
\hat\varphi_{t_0,t_\ell}\circ y_0\bigr)^{\text{\tiny T}}
\bigl(\hat\varphi_{t_{m+1},t_M}\circ \tilde K_v(t_m)V_v\circ \hat\varphi_{t_0,t_m}\circ y_0\bigr).
\end{equation*}
This formula outlines the contribution of the standard accumulation of local errors, 
over successive subintervals of the global interval of integration, to the global error.
Note that to leading order we have
\begin{equation*}
\mathcal V_{m,m}(u,v)=\mE\bigl(\tilde K_u(t_m)\tilde K_v(t_m)\bigr)
\bigl(V_u\circ y_0\bigr)^{\text{\tiny T}}\bigl(V_v\circ y_0\bigr).
\end{equation*}
For the term $\mathcal V_{\ell,m}(u,v)$, we focus for the moment on the case $m<\ell$ 
(our final conclusions below are true irrespective of this). At leading order we have
\begin{align*}
\hat\varphi_{t_0,t_\ell}&=\hat\varphi_{t_{m+1},t_\ell}
\circ\biggl(\id+\sum_{|a|=1}\tilde K_a(t_m)V_a+\cdots\biggr)\circ\hat\varphi_{t_0,t_{m}}\\
&=\id+\sum_{|a|=1}\tilde K_a(t_m)V_a+\cdots,
\intertext{and}
\hat\varphi_{t_{m+1},t_M}&=\hat\varphi_{t_{\ell+1},t_M}
\circ\biggl(\id+\sum_{|b|=1}\tilde K_b(t_\ell)V_b+\cdots\biggr)\circ\hat\varphi_{t_{m+1},t_{\ell}}\\
&=\id+\sum_{|b|=1}\tilde K_b(t_m)V_b+\cdots.
\end{align*}
Substituting these expressions into the form for $\mathcal V_{\ell,m}(u,v)$ above we get
\begin{align*}
\mathcal V_{\ell,m}&(u,v)\\
=&\;\mE\bigl(\tilde K_u(t_\ell)\bigr)\mE\bigl(\tilde K_v(t_m)\bigr)
(V_u\circ y_0)^{\text{\tiny T}}(V_v\circ y_0)\\
&\;+\sum_{|a|=1}\mE\bigl(\tilde K_u(t_\ell)\bigr)\mE\bigl(\tilde K_a(t_m)\tilde K_v(t_m)\bigr)
(V_u\circ V_a\circ y_0)^{\text{\tiny T}}(V_v\circ y_0)\\
&\;+\sum_{|b|=1}\mE\bigl(\tilde K_u(t_\ell)\tilde K_b(t_\ell)\bigr)\mE\bigl(\tilde K_v(t_m)\bigr)
(V_u\circ y_0)^{\text{\tiny T}}(V_b\circ V_v\circ y_0)\\
&\;+\sum_{\substack{|a|=1\\|b|=1}}
\mE\bigl(\tilde K_u(t_\ell)\tilde K_b(t_\ell)\bigr)\mE\bigl(\tilde K_a(t_m)\tilde K_v(t_m)\bigr)
(V_u\circ V_a\circ y_0)^{\text{\tiny T}}(V_b\circ V_v\circ y_0).
\end{align*}
This breakdown allows us to categorize the different mechanisms through which
local errors contribute to the global error at leading order. Indeed in the 
local flow remainder $R$ we distinguish terms with:\smallskip

(1) \emph{zero expectation}: terms $\tilde K_w$ with $|w|=n+1$ of zero expectation
generate terms of order $h^n$ in $\mathcal E^2$ through two routes, through $\mathcal V_{m,m}$ 
and the last term in the expression for $\mathcal V_{\ell,m}(u,v)$ just above.
In $\mathcal V_{m,m}$ they generate order $h^{n+1}$ terms, and the single 
sum over $m$ means that their contribution to the global error $\mathcal E^2$ 
is order $Mh^{n+1}=\mathcal O(h^n)$. In the last term in $\mathcal V_{\ell,m}(u,v)$,
they generate, when the expectations of the products indicated are non-zero,
terms of order $h^{n+2}$; the double sum over $\ell$ and $m$ is then order $h^n$.
Higher order terms $\tilde K_w$ with zero expectation simply generate a higher order
contribution to the global error.

(2) \emph{non-zero expectation}: terms $\tilde K_w$ with $|w|=n+1$ of 
non-zero expectation will generate, through the first term in $\mathcal V_{\ell,m}(u,v)$, 
terms of order $h^{n-1}$---not consistent with an order $n/2$ integrator with global
mean-square error of order $h^n$. 
If any such terms exist in $R$, their expectation must be included 
(which is a cheap additional computational cost) in the integrator, 
i.e.\ we should include $\mE(\tilde K_w)V_w$ in $\hat\psi$.
This will mean that the term left
in $R$ is $\bigl(\tilde K_w-\mE(\tilde K_w)\bigr)V_w$ which has zero
expectation and contributes to the global error through mechanism (1) above.
Further, terms $\tilde K_w$ with $|w|=n+2$ of non-zero expectation will 
generate terms of order $h^n$ in $\mathcal E^2$, i.e.\ they will
also contribute at leading order through the first term in $\mathcal V_{\ell,m}(u,v)$.
\smallskip

The terms of non-zero expectation in $R$, which contribute at leading order to $\mathcal E^2$,
either appear as natural next order terms or through the higher order terms 
$C_2(r\hat\psi+\hat\psi r)$ mentioned in the last section. We will see this explicitly 
in the Simulations section presently. We can, with a cheap additional computational cost,
include them through their expectations in the integrators, so that when we compare
their global errors, the corresponding terms left in the remainders have zero expectation
and are higher order (and thus not involved in the comparison at leading order). 
Further, fortuitously,
the terms of zero expectation which contribute in (1) through the last term in 
$\mathcal V_{\ell,m}(u,v)$ are exactly the same for the stochastic Taylor 
and sinh-log integrators. This is true at all orders and is a result of the
following lemma, and that for the stochastic Taylor and sinh-log expansions
when $|a|=1$, then $\tilde K_a=J_a$.

\begin{lemma}
Suppose $a,w\in\pA$ and $|a|=1$, then we have 
\begin{equation*}
\mE\bigl(J_aJ_w\bigr)\equiv\tfrac12\mE\bigl(J_a(J_w-J_{\alpha\circ w})\bigr).
\end{equation*}
\end{lemma}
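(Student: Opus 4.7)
The plan is to recast the identity as $\mE\bigl(J_a(J_w+J_{\alpha\circ w})\bigr)=0$ and then resolve it by combining Lemma~\ref{lem:expdiff} with the reflection symmetry of the driving Wiener noise. The computation splits naturally by the parity of $|w|$: one case follows from an algebraic cancellation, the other from a Gaussian parity argument.

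Since $|a|=1$ the reversal acts trivially on $a$, i.e.\ $\rho\circ a=a$, so Lemma~\ref{lem:expdiff} with $u=a$, $v=w$ gives $\mE(J_aJ_w)=\mE(J_aJ_{\rho\circ w})$. The antipode is the signed reversal, $\alpha\circ w=(-1)^{|w|}\rho\circ w$, whence $\mE(J_aJ_{\alpha\circ w})=(-1)^{|w|}\mE(J_aJ_w)$ and therefore
\[
\mE\bigl(J_a(J_w+J_{\alpha\circ w})\bigr)=\bigl(1+(-1)^{|w|}\bigr)\,\mE(J_aJ_w).
\]
When $|w|$ is odd the prefactor already vanishes and the identity follows immediately. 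When $|w|$ is even I would appeal to invariance of the joint law of $(W^1,\ldots,W^d)$ under the reflection $W\mapsto -W$; this reflection sends each multiple Stratonovich integral $J_v$ to $(-1)^{|v|}J_v$, and hence
\[
\mE(J_aJ_w)=(-1)^{|a|+|w|}\,\mE(J_aJ_w)=-\mE(J_aJ_w),
\]
forcing $\mE(J_aJ_w)=0$. In both parities the right-hand side is zero, which is exactly what is needed.

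The only real subtlety is invoking the reflection-symmetry step cleanly; it is just the familiar observation that an odd total index weight forces the expectation to vanish. A fully internal alternative, staying within the It\^o--Stratonovich machinery already used inside the proof of Lemma~\ref{lem:expdiff}, is to expand $J_w=\sum_{u\in\mathbb D(w)}(1/2)^{\nn(u)}I_u$ and note that each merge in $\mathbb D(w)$ reduces the number of non-zero indices by two. When $|w|$ is even every $u\in\mathbb D(w)$ has an even number of non-zero indices, so each $I_u$ lies in a Wiener chaos of even order and is orthogonal to the first-chaos element $I_a=J_a$; this delivers the same vanishing and closes the proof.
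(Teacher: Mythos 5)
Your argument is correct, and it reaches the identity by a genuinely different route from the paper. The paper's proof disposes of the even case with a one-line assertion that both sides vanish, and handles the odd case by a combinatorial pairing: writing $\mE(J_aJ_w)=\mE\circ\mu(a\sh w)$ and matching each term of $a\sh w$ with the reversal of a term of $a\sh(\alpha\circ w)$, so that each pair carries the same expectation as the corresponding single term of $a\sh w$ after the factor $\tfrac12$. You instead recycle Lemma~\ref{lem:expdiff}: since $\rho\circ a=a$ for a single letter and $\alpha\circ w=(-1)^{|w|}\rho\circ w$ with $\mu$ linear, you get $\mE(J_aJ_{\alpha\circ w})=(-1)^{|w|}\mE(J_aJ_w)$, which kills the odd case purely algebraically, and you then settle the even case by the parity of the Wiener chaos (equivalently the reflection $W\mapsto-W$), noting that every $u\in\mathbb D(w)$ has $\ell(u)$ of the same parity as $|w|$ so each $I_u$ is orthogonal to the first-chaos element $I_a$. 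What your route buys is economy and transparency: it reuses a lemma already proved for the main theorem rather than introducing a fresh pairing construction, and it supplies an explicit justification for the even case that the paper leaves implicit. What the paper's route buys is that the pairing argument stays entirely inside the shuffle-algebra formalism developed in Section~\ref{sec:cspolys} and does not depend on Lemma~\ref{lem:expdiff} (so the two lemmas remain logically independent). Both proofs are sound; yours is arguably the cleaner derivation given that Lemma~\ref{lem:expdiff} is already available.
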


\begin{proof} If $|w|$ is even, the expectations on both sides are zero.
If $|w|$ is odd, in the shuffle products $a\sh w$ and $a\sh(\alpha\circ w)$, 
pair off terms where the letter $a$ appears in the same position in an individual term
of $a\sh w$ and the reverse of an individual term of $a\sh(\alpha\circ w)$. The pair,
with the one-half factor, will have the same expectation as the corresponding term in
shuffle product $a\sh w$.
\end{proof}

\subsection{Simulations}
We will demonstrate the properties we proved for the sinh-log series
for numerical integration schemes of strong orders one and three-halves.
We will consider a stochastic differential system with no drift, two
driving Wiener processes and non-commuting governing \emph{linear} vector fields 
$V_i\circ y\equiv a_iy$ for $i=1,2$. 

We focus on the strong order one case first, and extend the analytical 
computations in Lord, Malham and Wiese (2008). With $n=2$, and 
$C_1=1$ and $C_2=-\tfrac12+\epsilon$, 
the mean-square excess $E$, for general $\eps\in\mR$, given by
\begin{equation*}
E=h^3\bigl((U_{112}\,y_0)^{\text{\tiny T}}B(\eps)\,(U_{112}\,y_0)
+(U_{221}\,y_0)^{\text{\tiny T}}B(\eps)\,(U_{221}\,y_0)\bigr).
\end{equation*}
Here $U_{112}=(a_1^2a_2, a_1a_2a_1, a_2a_1^2, a_2^3)^{\text{\tiny T}}$ and  
$U_{221}=(a_2^2a_1, a_2a_1a_2, a_1a_2^2,a_1^3)^{\text{\tiny T}}$ 
are both $4N\times N$ real matrices and the $4N\times4N$
real matrix $B(\eps)$ consists of $N\times N$ blocks of the form
$b(\eps)\otimes I_N$ (here $\otimes$ denotes the Kronecker product) 
where $I_N$ is the $N\times N$ identity matrix and $b(\eps)$ is
\begin{equation*}
\begin{pmatrix}
\tfrac{5}{24}-\bigl(\eps-\tfrac14\bigr)\bigl(3\eps-\tfrac{5}{12}\bigr)&
-\eps\bigl(3\eps-\tfrac{11}{12}\bigr)&
-\bigl(\eps-\tfrac14\bigr)\bigl(3\eps-\tfrac{5}{12}\bigr)&
-3\eps\bigl(3\eps-\tfrac{5}{12}\bigr)\\
-\eps\bigl(3\eps-\tfrac{11}{12}\bigr)&
-\eps\bigl(3\eps-\tfrac23\bigr)&
-\eps\bigl(3\eps-\tfrac{11}{12}\bigr)&
-\eps\bigl(3\eps-\tfrac12\bigr)\\
-\bigl(\eps-\tfrac14\bigr)\bigl(3\eps-\tfrac{5}{12}\bigr)&
-\eps\bigl(3\eps-\tfrac{11}{12}\bigr)&
\tfrac{5}{24}-\bigl(\eps-\tfrac14\bigr)\bigl(3\eps-\tfrac{5}{12}\bigr)&
-3\eps\bigl(3\eps-\tfrac{5}{12}\bigr)\\
-3\eps\bigl(3\eps-\tfrac{5}{12}\bigr)&
-\eps\bigl(3\eps-\tfrac12\bigr)&
-3\eps\bigl(3\eps-\tfrac{5}{12}\bigr)&
-5\eps\bigl(3\eps-1\bigr)
\end{pmatrix}.
\end{equation*}
Each of the eigenvalues of $b(\eps)$ are $N$ multiple eigenvalues for $B(\eps)$.
The eigenvalues of $b(\epsilon)$ are shown in Figure~\ref{eigenvalues}.
The sinh-log expansion corresponds to $\eps=0$, while the  
exponential Lie series corresponds to $\eps=\tfrac16$. 
The eigenvalues of $b(0)$ are $\tfrac{5}{24}$ and zero (thrice)---confirming
our general result for the sinh-log expansion.
However, the eigenvalues of $b(\tfrac16)$ are $\tfrac{5}{24}$, 
$0.5264$, $0.1667$ and $-0.0264$. One negative
eigenvalue means that there are matrices $a_1$ and $a_2$ and initial
conditions $y_0$ for which the order one stochastic Taylor method is
more accurate, in the mean-square sense, than the exponential 
Lie series method (for linear vector fields we also call this the Magnus method).
From Figure~\ref{eigenvalues}, we deduce that for any small values of $\epsilon$ 
away from zero, we cannot guarantee $E>0$ for all possible governing vector fields
and initial data.
The strong order one sinh-log integrator is optimal in this sense. This is
also true at the next order from Corollary~\ref{cor:oddeven}.

\begin{figure}
  \begin{center}
  \includegraphics[width=8cm,height=6cm]{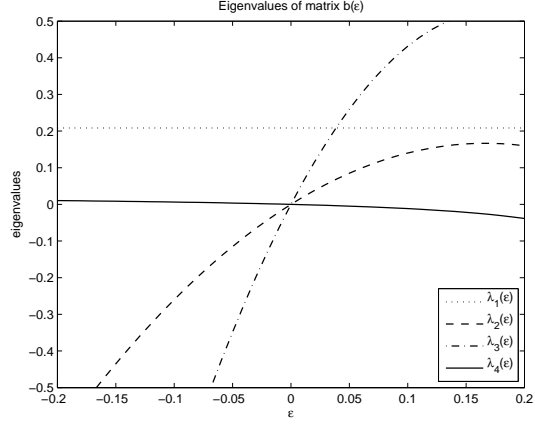}
  \end{center}
  \caption{The eigenvalues, $\lambda_i(\eps)$, $i=1,\ldots,4$ 
of matrix $b(\epsilon)$, as a function of $\epsilon$.}
\label{eigenvalues}
\end{figure}

For our simulations we take $N=2$ and set the coefficient matrices to be
\begin{equation*}
a_1=\begin{pmatrix}
     0.105 &  -7.43\\
     0.03 & 0.345
\end{pmatrix}
\qquad\text{and}\qquad
a_2=\begin{pmatrix}
    -0.065 &  -9.44\\
    -0.005 &   0.265
\end{pmatrix}.
\end{equation*}
In Figure~\ref{contours}, using these matrices, we plot the mean-square
excess $E^{\mathrm{ls}}$ for the exponential Lie series and 
$E^{\mathrm{sl}}$ for the sinh-log series, as a function of the two components 
of $y_0$. We see there are regions of the phase space where $E^{\mathrm{ls}}$
is negative---of course $E^{\mathrm{sl}}$ is positive everywhere.
Hence if the solution $y_t$ of the stochastic differential
system governed by the vectors fields $V_i\circ y=a_i y$, $i=1,2$,
remains in the region where $E^{\mathrm{ls}}$ is negative, then
the numerical scheme based on the order one exponential Lie series
is less accurate than the stochastic Taylor method. Note that for
the stochastic Taylor method, we need to include the terms 
\begin{equation*}
\tfrac18 h^2(a_1^4+a_1^2a_2^2+a_2^2a_1^2+a_2^4)
\end{equation*}
in the integrator. These are the expectation of terms with 
$|w|=4$ which contribute at leading order in the global
error (only), and which can be cheaply included in the 
stochastic Taylor integrator. For the  
exponential Lie series we include the terms
\begin{equation*}
\tfrac{1}{24} h^2([a_2,[a_2,a_1]]a_1+[a_1,[a_1,a_2]]a_2+a_2[a_1,[a_1,a_2]]+a_1[a_2,[a_2,a_1]]),
\end{equation*}
in $\hat\psi^{\mathrm{ls}}$. These are non-zero expectation terms with $|w|=4$ 
which contribute at leading order in the global
error through $-C_2(r\hat\psi+\hat\psi r)$, where $C_2=-\tfrac12$.
In the same vein, 
for the sinh-log integrator, we include in $\hat\psi^{\mathrm{sl}}$ the terms
\begin{equation*}
\tfrac14 h^2(2a_1^4+a_2^2a_1^2+a_2a_1^2a_2+a_1a_2^2a_1+a_1^2a_2^2+2a_2^4).
\end{equation*}
Figure~\ref{errorplot1p0} shows the global error versus
time for all three integrators for the linear system.
We used the global interval of integration
$[0,0.0002]$, starting with $y_0=(19.198, 28.972)^{\mathrm{T}}$,
and stepsize $h=2.5\times 10^{-5}$.
With this initial data, the small global interval of integration means that
all ten thousand paths simulated stayed within the region of the 
phase space where $E^{\mathrm{ls}}$ is negative in Figure~\ref{contours}. 

\begin{figure}
  \begin{center}
  \includegraphics[width=6cm,height=6cm]{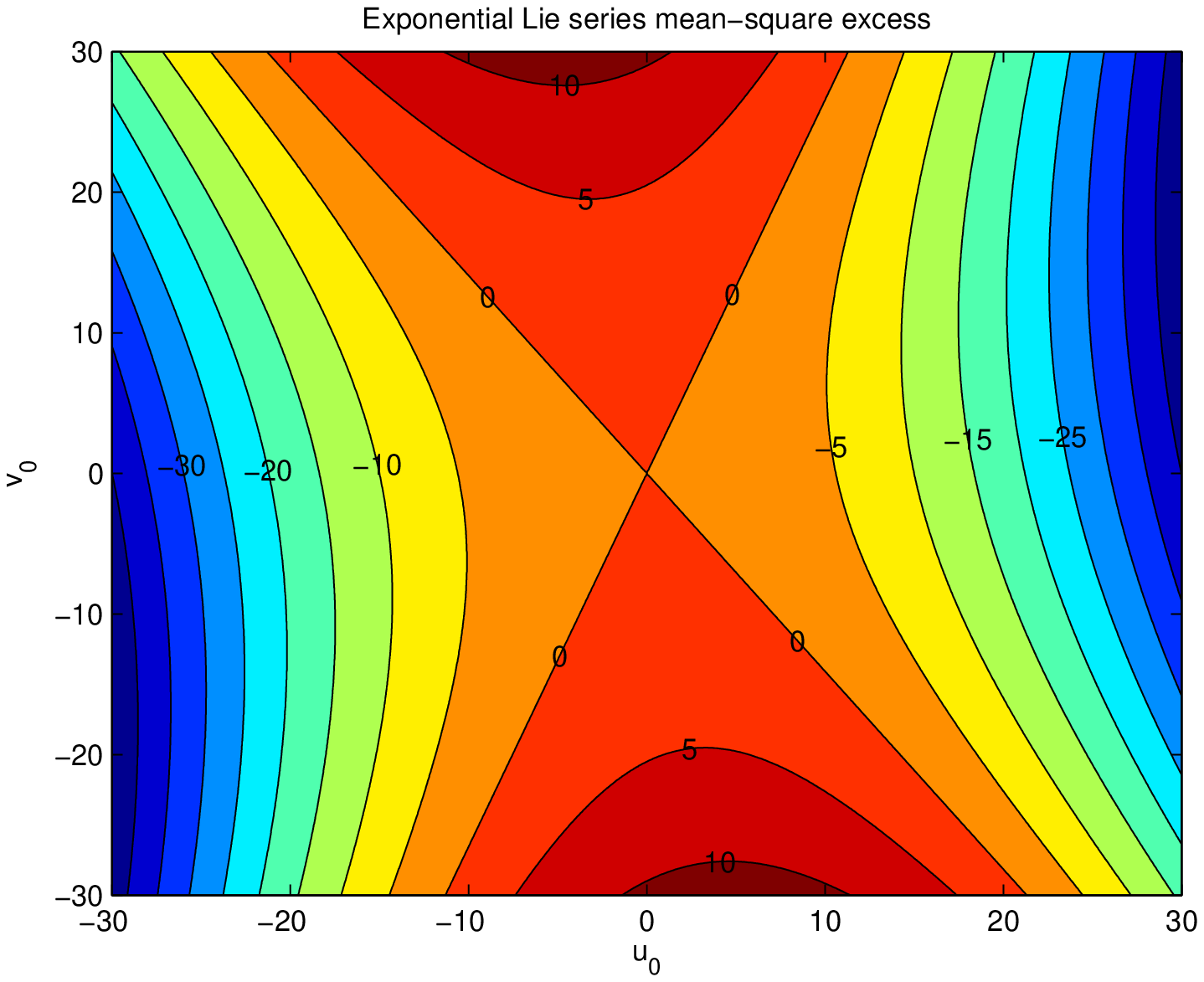}
  \includegraphics[width=6cm,height=6cm]{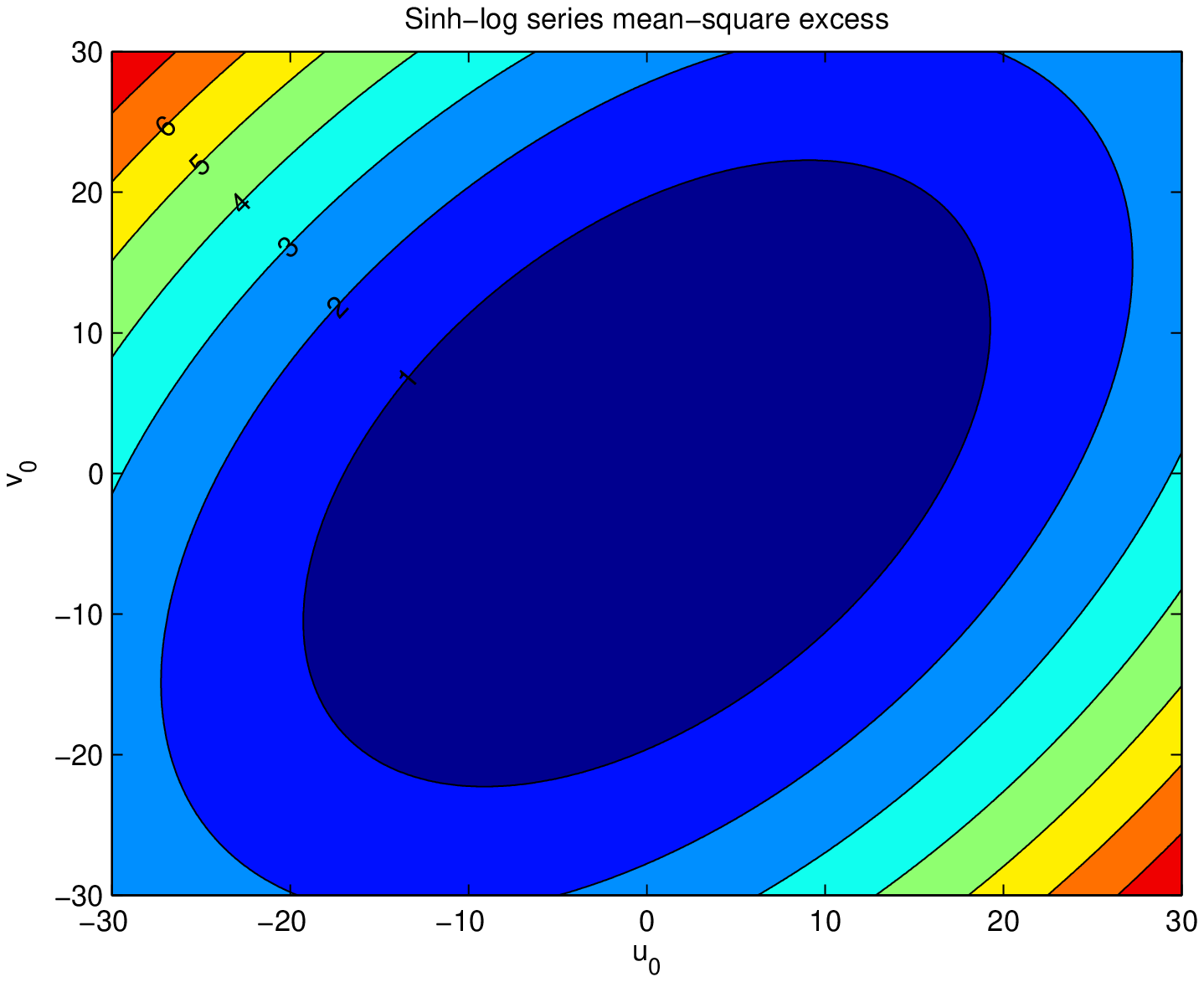}
  \end{center}
  \caption{Contour plots of the mean-square excess as a function 
of the two components of the data $y_0=(u_0,v_0)^{\text{\tiny T}}$,
for the strong order one example, for the exponential Lie series 
(left panel) and the sinh-log series (right panel).}  
\label{contours}
\end{figure}

\begin{figure}
  \begin{center}
  \includegraphics[width=7cm,height=5cm]{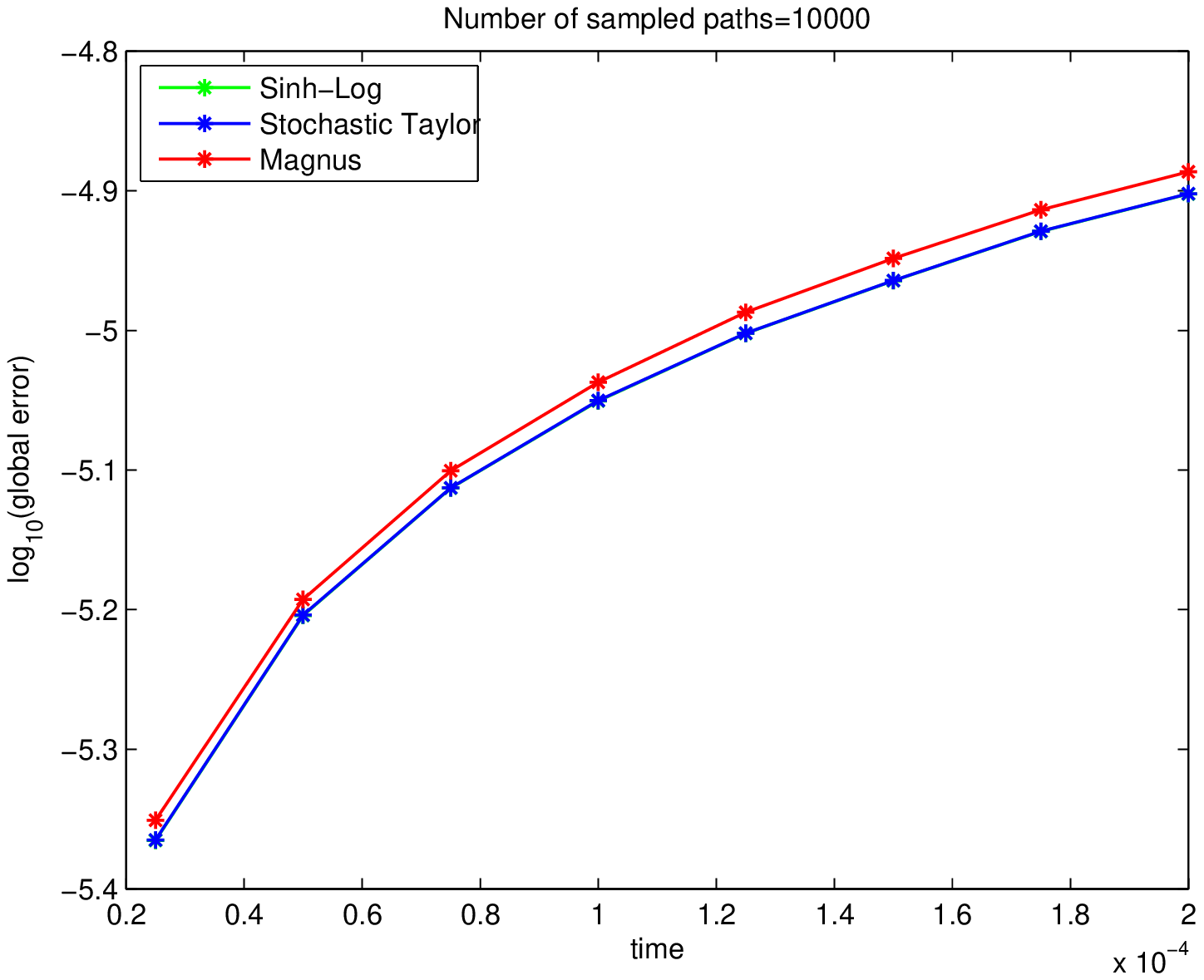}\\
  \includegraphics[width=7cm,height=5cm]{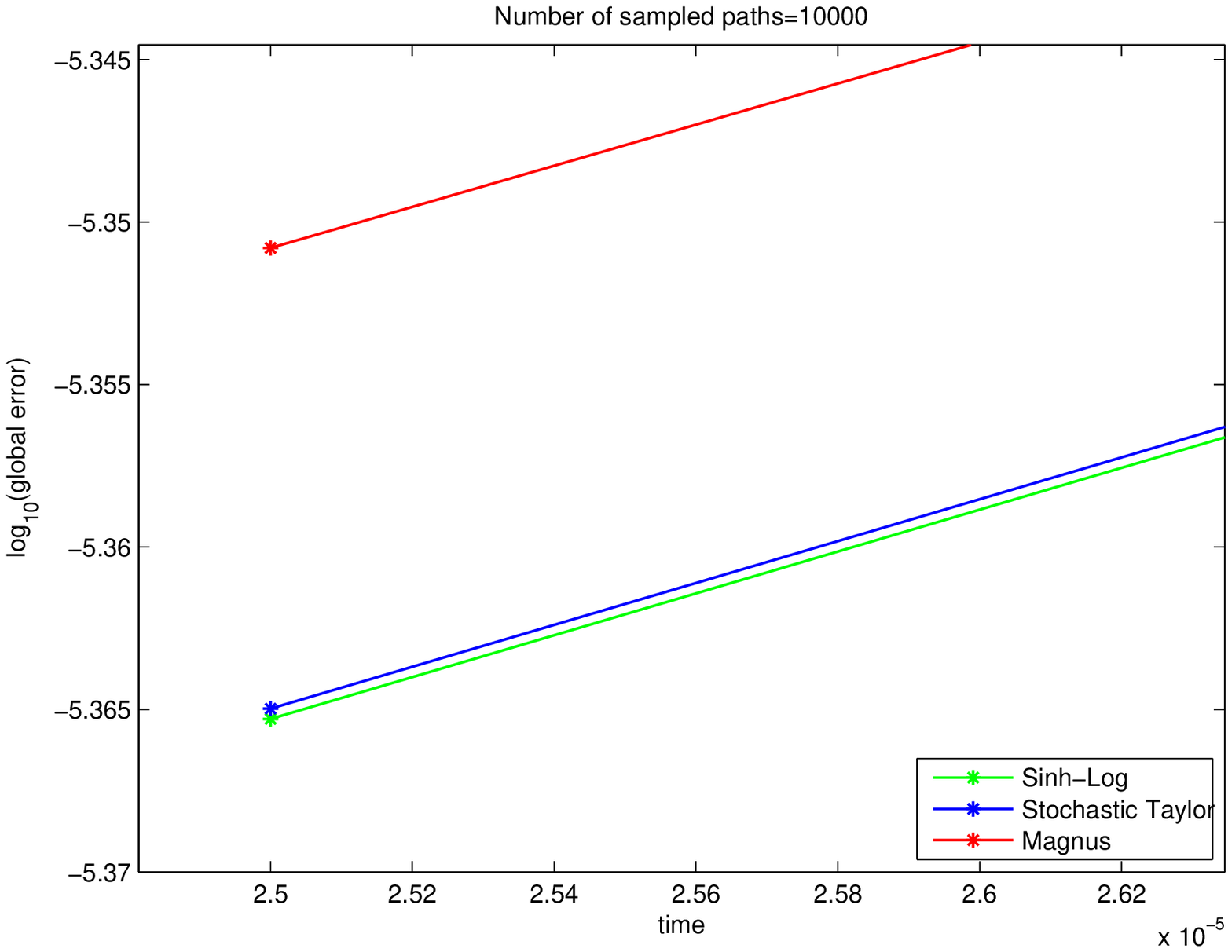}\\
  \includegraphics[width=7cm,height=5cm]{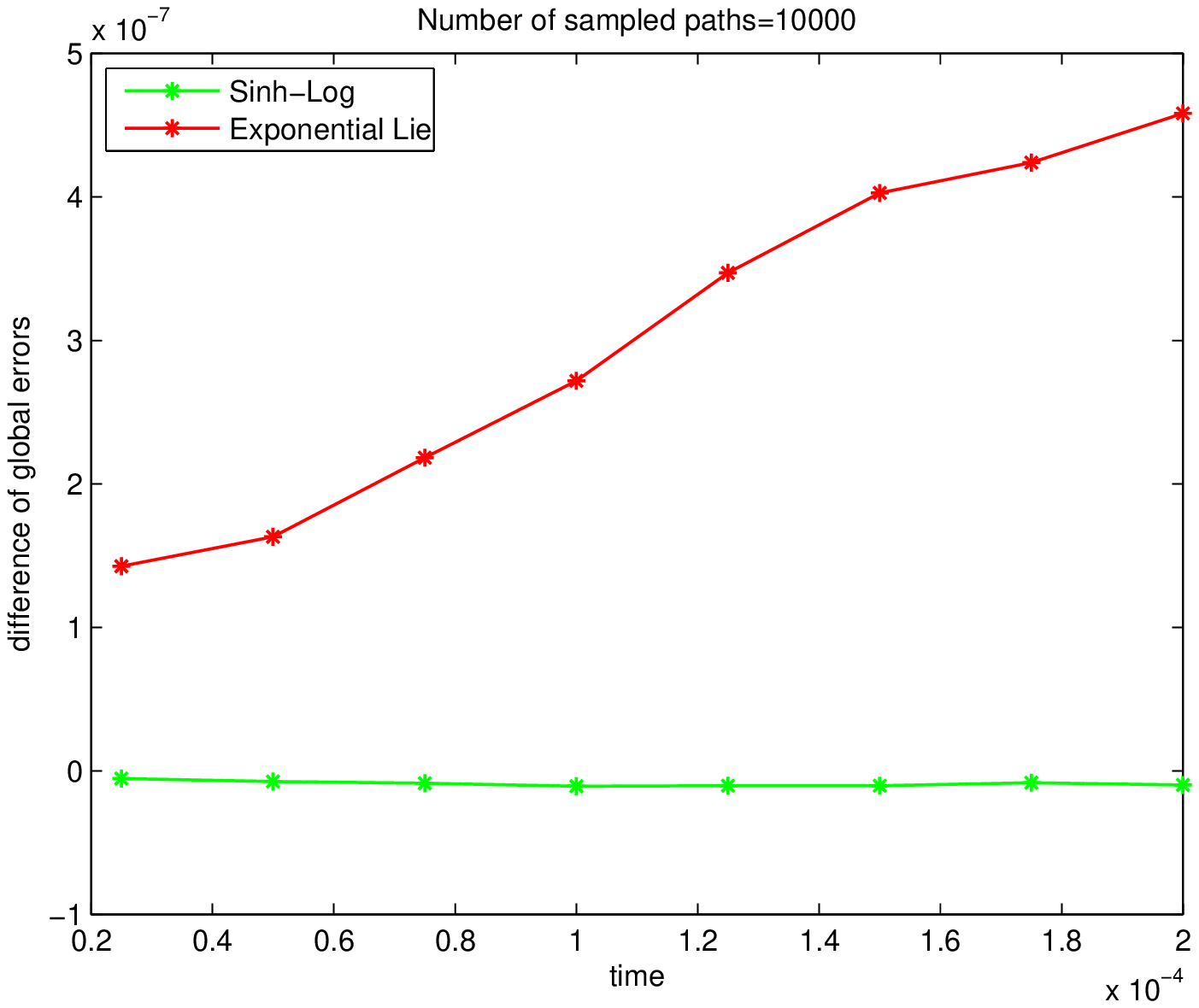} 
  \end{center}
  \caption{Mean-square global error vs time plot for the sinh-log, 
exponential Lie (Magnus) and stochastic Taylor methods for the order 
one example. The top panel shows the error, and the middle
panel a magnification of the left region of the plot in the 
top panel. The lower panel shows the differences between, the 
global sinh-log and exponential Lie errors, and the error of
stochastic Taylor method.}  
\label{errorplot1p0}
\end{figure}

The error for the exponential 
Lie series integrator, we see in Figure~\ref{errorplot1p0}, 
is larger than that for the stochastic Taylor integrator. The
error for the sinh-log integrator is smaller, though only
marginally so. In fact it is hardly discernible from the stochastic
Taylor plot, so the middle panel shows a magnification 
of the left region of the plot in the top panel. We
plot the differences between the errors in the 
lower panel to confirm the better performance of the
sinh-log integrator over the global interval. Further, 
estimates for the local errors for the sinh-log and Lie series 
integrators from the data in Figure~\ref{errorplot1p0}, of course, 
quantitatively match analytical estimates for the mean-square 
excess $E$ above. 

\begin{remark}
Generically the Castell--Gaines method of strong order
one markedly outperforms the sinh-log method (which itself 
outperforms the stochastic Taylor method more markedly). However,
as we have seen, there are pathological cases for which
this is not true.
\end{remark}

In Figure~\ref{errorplot1p5} we compare the global errors for the
strong order three-halves sinh-log and stochastic Taylor methods,
with governing linear vector fields with coefficient matrices
\begin{equation*}
a_1=\begin{pmatrix}0 & 1\\ -1/2 & -51/200         
\end{pmatrix}
\qquad\text{and}\qquad
a_2=\begin{pmatrix}1 & 1\\ 1 & 1/2
\end{pmatrix}.
\end{equation*}
and initial data $y_0=(1, 1/2)^{\text{\tiny T}}$. Again as expected we see that
the stochastic Taylor method is less accurate than the sinh-log method
for sufficienltly small stepsizes.

\begin{figure}
  \begin{center}
  \includegraphics[width=7cm,height=5cm]{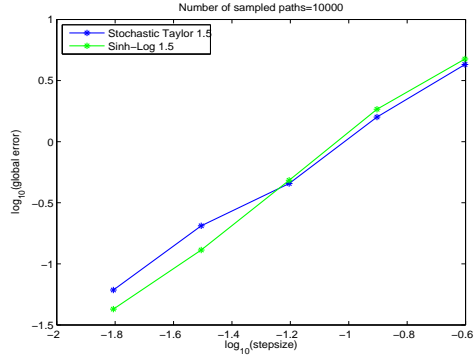}
  \end{center}
  \caption{Global error vs stepsize plot for the sinh-log 
and stochastic Taylor methods of strong order three-halves example.}  
\label{errorplot1p5}
\end{figure}

\begin{remark}
There is one caveat we have not mentioned thusfar. Constructing the 
approximation $\hat\varphi_t$ from $\hat\psi_t$ is in general nontrivial.
For linear vector fields $V_i\circ y=a_i\,y$, we know that $\hat\psi_t$
is simply a matrix, and we can straightforwardly construct $\hat\varphi_t$
using the matrix square root. For general nonlinear vector fields,
we have not as yet found a superior method to simply expanding the square root
shown to a sufficient number of high degree terms.
\end{remark}

\section{Concluding remarks}
\label{sec:conclu}
We have shown that the mean-square remainder associated with the sinh-log
series is always smaller than the corresponding stochastic Taylor mean-square
remainder, when there is no drift, to all orders. Since the order one-half
sinh-log numerical method is the same as the order one-half Castell--Gaines
method, it trivially inherits the asymptotically efficient property as well
(indeed, if we include a drift term as well). We have not endeavoured to
prove asymptotic efficiency more generally.
However, in Section~\ref{sec:prac} we demonstrated that for two driving
Wiener processes, the order one sinh-log numerical method is optimal
in the following sense. From Figure~\ref{eigenvalues}, we see that
any deviation of $\eps$ from zero will generate a negative eigenvalue 
for $b(\eps)$. Consequently there exist vector fields such that the mean-square excess 
will be negative in regions of the phase space. Further, 
from Corollary~\ref{cor:oddeven}, the order three-halves sinh-log integrator
is also optimal in this sense.
These results are only true when there is no drift,
and it could be argued that our simulations
and demonstrations are somewhat academic. However the results we
proved have application in splitting methods 
for stochastic differential equations. For example, in stochastic volatility 
simulation in finance, Ninomiya \& Victoir (2006) and 
Halley, Malham \& Wiese (2008) simulate the Heston model for 
financial derivative pricing, and use splitting to preserve positivity for
the volatility numerically. They employ a Strang splitting that separates
the diffusion flow from the drift flow and requires a distinct simulation
of the purely diffusion governed flow. 

Why is the sinh-log expansion the answer? This result is intimately tied to the 
mean-square error measure we chose. The terms in the remainder of
any truncation contain multiple Stratonovich integrals. Associated
with each one is a mean-square error. There is a structure underlying
these expectations.
The sinh-log expansion somehow encodes this structure in an optimal
form, it emulates the stochastic Taylor information more concisely.
The next question is of course, what is the best structure 
when we include drift? Answering this is our next plan of action. 

%Lyons \& Victoir (2004; p.~183), Baudoin (2004; p.~14)
%$\mE(\varphi)=\exp\Bigl(t\bigl(V_0+\tfrac12(V_1^2+\cdots+V_d^2)\bigr)\Bigr)$ 

\begin{acknowledgements}
We would like to thank Peter Friz, Terry Lyons and Hans Munthe--Kaas 
for interesting and useful discussions related to this work.
We would also like to thank the anonymous referees for their 
critique and suggestions which helped improve the original manuscript.
\end{acknowledgements}

\label{lastpage}

\end{document}